\newcommand\be{\begin{equation}}
\newcommand\ee{\end{equation}}
\newcommand\bea{\begin{eqnarray}}
\newcommand\eea{\end{eqnarray}}
\newcommand\bi{\begin{itemize}}
\newcommand\ei{\end{itemize}}
\newcommand\ben{\begin{enumerate}}
\newcommand\een{\end{enumerate}}
\newtheorem{thm}{Theorem}[section]
\newtheorem{cor}[thm]{Corollary}
\newtheorem{lem}[thm]{Lemma}
\newtheorem{prop}[thm]{Proposition}
\newtheorem{defi}[thm]{Definition}
\newtheorem{rek}[thm]{Remark}
\newtheorem{cla}[thm]{Claim}
\numberwithin{equation}{section}
\begin{document}

\title{A Positive Operator-Valued Measure for an Iterated Function System}

\maketitle

\begin{center}Trubee Davison\footnote{Department of Mathematics \\ \indent University of Colorado \\ \indent Campus Box 395 \\ \indent Boulder, CO 80309 \\ \indent trubee.davison@colorado.edu }\end{center}

\begin{abstract} Given an iterated function system (IFS) on a complete and separable metric space $Y$, there exists a unique compact subset $X \subseteq Y$ satisfying a fixed point relation with respect to the IFS. This subset is called the attractor set, or fractal set, associated to the IFS. The attractor set supports a specific Borel probability measure, called the Hutchinson measure, which itself satisfies a fixed point relation. P. Jorgensen generalized the Hutchinson measure to a projection-valued measure, under the assumption that the IFS does not have essential overlap \cite{Jorgensen2} \cite{Jorgensen}. In previous work, we developed an alternative approach to proving the existence of this projection-valued measure \cite{Davison} \cite{Davison2} \cite{Davison3}. The situation when the IFS exhibits essential overlap has been studied by Jorgensen and colleagues in \cite{Kornelson}. We build off their work to generalize the Hutchinson measure to a positive-operator valued measure for an arbitrary IFS, that may exhibit essential overlap. This work hinges on using a generalized Kantorovich metric to define a distance between positive operator-valued measures. It is noteworthy to mention that this generalized metric, which we use in our previous work as well, was also introduced by R.F. Werner to study the position and momentum observables, which are central objects of study in the area of quantum theory \cite{Werner}. We conclude with a discussion of Naimark's dilation theorem with respect to this positive operator-valued measure, and at the beginning of the paper, we prove a metric space completion result regarding the classical Kantorovich metric.

\end{abstract}

\tableofcontents

\noindent \keywords[Keywords: Kantorovich metric, Operator-valued measure, Cuntz algebra, Fixed point]
\noindent [Mathematics subject classification 2010: 46C99 - 46L05] \\

\noindent [Publication note: The final publication is available at Springer via \\
http://dx.doi.org/10.1007/s10440-018-0161-6. This version has been updated with small changes to match the final publication.]


\section{Background:} 

In this opening section, we will provide relevant background information, provide an overview of what is to come, and highlight two applications to quantum theory. To begin, let $(Y,d)$ be a complete and separable metric space. 

\begin{defi} A Lipschitz contraction on $Y$ is a map $L: Y \rightarrow Y$ such that 
$$d(L(x), L(y)) \leq r d(x,y)$$ 

\noindent for all $x,y, \in Y$, where $0 < r < 1$.   

\end{defi}

Let $L: Y \rightarrow Y$ be a Lipschitz contraction on $Y$. Since $Y$ is a complete metric space, it is well known that $L$ admits a unique fixed point $y \in Y$, meaning that $L(y) = y$.  This result is known as the Contraction Mapping Principle, or the Banach Fixed Point Theorem.  In 1981, J. Hutchinson published a seminal paper (see \cite{Hutchinson}), where he generalized the Contraction Mapping Principle to a finite family, $\mathcal{S} = \{\sigma_0,...,\sigma_{N-1}\}$, of Lipschitz contractions on $Y$, where $N \in \mathbb{N}$ is such that $N \geq 2$. Indeed, one can associate to $\mathcal{S}$ a unique compact subset $X \subseteq Y$ which is invariant under the $\mathcal{S}$, meaning that 

\begin{equation} \label{fractal} X = \bigcup_{i=0}^{N-1} \sigma_i(X). \end{equation} 

A finite family of Lipschitz contractions on $Y$ is called an iterated function system (IFS) on $Y$, and the compact invariant subset $X$ described above is called the self-similar fractal set, or attractor set, associated to the IFS.  The existence and uniqueness of the attractor set can be obtained in several ways. One way is the following: if $A, B \subseteq Y$, the Hausdorff distance, $\delta$, between $A$ and $B$ is defined by

$$\delta(A,B)=\sup\{d(a,B),d(b,A):a \in A, b \in B\}.$$

\noindent Denote by $\mathcal{K}$ the collection of compact subsets of $Y$.   It is well known that the metric space $(\mathcal{K}, \delta)$ is complete.  The following theorem guarantees the existence and uniqueness of $X$. 

\begin{thm}\cite{Hutchinson}\cite{Barnsley}[Hutchinson, Barnsley] The Hutchinson-Barnsley operator $F: \mathcal{K} \rightarrow \mathcal{K}$ given by $$K \mapsto  \bigcup_{i=0}^{N-1} \sigma_i(K)$$ is a Lipschitz contraction on the complete metric space $(\mathcal{K}, \delta)$.  
\noindent By the Contraction Mapping Principle, there exists a unique compact subset $X \subseteq Y$ such that $F(X) = X,$ which is equation (\ref{fractal}). 
\end{thm}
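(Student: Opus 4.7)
The plan is to establish two standard facts about the Hausdorff metric and then combine them. First I would verify that $F$ is well-defined: for each $i$, $\sigma_i$ is Lipschitz and hence continuous, so $\sigma_i(K)$ is compact whenever $K \in \mathcal{K}$, and the finite union of compact sets is compact. Thus $F$ maps $\mathcal{K}$ into $\mathcal{K}$.

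Next I would record two lemmas, each following directly from the definition of $\delta$. \emph{Lemma A (contraction of sets):} if $\sigma: Y \to Y$ is Lipschitz with constant $r$, then $\delta(\sigma(A), \sigma(B)) \leq r\, \delta(A,B)$ for all $A,B \in \mathcal{K}$. To see this, for $a \in A$,
$$d(\sigma(a), \sigma(B)) = \inf_{b \in B} d(\sigma(a), \sigma(b)) \leq r \inf_{b \in B} d(a,b) = r\, d(a,B),$$
and symmetrically in $B$; taking suprema yields the claim. \emph{Lemma B (union of sets):} for compact families $\{A_i\}$ and $\{B_i\}$,
$$\delta\!\left(\bigcup_{i=0}^{N-1} A_i,\; \bigcup_{i=0}^{N-1} B_i\right) \leq \max_{0 \leq i \leq N-1} \delta(A_i, B_i).$$
Indeed, any point of $\bigcup_i A_i$ lies in some $A_{i_0}$, so its distance to $\bigcup_i B_i$ is bounded by its distance to $B_{i_0}$, which in turn is bounded by $\delta(A_{i_0}, B_{i_0})$; the symmetric argument for $B$ then gives the stated bound.

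Combining Lemmas A and B with $r := \max_{i} r_i < 1$ (where $r_i$ is the Lipschitz constant of $\sigma_i$), I would conclude
$$\delta(F(K_1), F(K_2)) \leq \max_i \delta(\sigma_i(K_1), \sigma_i(K_2)) \leq \max_i r_i\, \delta(K_1,K_2) = r\, \delta(K_1,K_2),$$
so $F$ is a Lipschitz contraction on the complete metric space $(\mathcal{K}, \delta)$. The existence and uniqueness of $X \in \mathcal{K}$ with $F(X)=X$ then follow from the Contraction Mapping Principle.

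All steps are routine, but the subtlest one is Lemma B: it is tempting but wrong to bound $\delta(\bigcup A_i, \bigcup B_i)$ by $\sum_i \delta(A_i, B_i)$ or to forget that both directions of the Hausdorff $\sup$ need to be checked, so I would take care to articulate the ``any point lies in some $A_{i_0}$'' reduction explicitly and to handle both sides symmetrically.
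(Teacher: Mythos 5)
Your proof is correct and is the standard argument: the paper itself does not prove this theorem but only cites Hutchinson and Barnsley, and your two lemmas (Lipschitz images contract the Hausdorff distance; the Hausdorff distance of unions is bounded by the maximum of the pairwise distances) are exactly the decomposition used in those sources. Nothing is missing.
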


In this paper, we will consider the attractor set from a measure theoretic perspective. In particular, the attractor set can be realized as the support of a Borel probability measure on $Y$. This measure, which we denote by $\mu$, satisfies the fixed point relation

\begin{equation} \label{hutmeasure} \mu(\cdot) = \sum_{i=0}^{N-1} \frac{1}{N}\mu(\sigma_i^{-1}(\cdot)), \end{equation}

\noindent and is often referred to as the Hutchinson measure. It is the unique fixed point of a Lipschitz contraction, $T$, on an appropriate complete metric space of Borel probability measures on $Y$. Naturally, the map $T$ is given by 

$$T(\nu) = \sum_{i=0}^{N-1} \frac{1}{N}\nu(\sigma_i^{-1}(\cdot)),$$ 

\noindent for $\nu$ a Borel probability measure on $Y$. The metric, $H$, is given by 

$$H(\mu,\nu) = \sup_{f \in \text{Lip}_1(Y)} \left\{ \left| \int_Y{f}d\mu - \int_Y{f}d\nu \right| \right\},$$ 

\noindent where $\text{Lip}_1(Y) = \{f:Y \rightarrow \mathbb{R}: |f(x)-f(y)| \leq d(x,y) \text{ for all } x,y \in Y\}$, and where $\mu$ and $\nu$ are Borel probability measures on $Y$. This metric is called the Kantorovich metric. 

What is the appropriate complete metric space of Borel probability measures on Y? To answer this question, we make the following definitions: 

\begin{itemize} 

\item Let $Q(Y)$ be the collection of all Borel probability measures on $Y$. 

\item Let $M_{\text{loc}}(Y)$ be the collection of Borel probability measures on $Y$ that have bounded support. 

\item Let $M(Y)$ be the collection of Borel probability measures $\nu$ on $Y$ such that $\int |f| d\nu < \infty$ for all $f \in \text{Lip}(Y)$, where $\text{Lip}(Y)$ is the collection of all real-valued Lipschitz functions on $Y$. 

\end{itemize} 

\noindent We consider two cases: 

\begin{enumerate} 

\item In the case that $(Y,d)$ is a compact (and therefore bounded) metric space, $Q(Y) = M_{\text{loc}}(Y) = M(Y)$. Moreover, in this case, the Kantorovich metric is well-defined (finite) on $Q(Y)$. Indeed, it is well known that $(Q(Y), H)$ is a compact metric space (and therefore complete). It is henceforth appropriate to consider the Contraction Mapping Principle on $(Q(Y), H)$, with respect to the Lipschitz contraction $T$. \\

\item In the case that $(Y,d)$ is an arbitrary complete and separable metric space, the Kantorovich metric is not necessarily well-defined (finite) on $Q(Y)$. Accordingly, we must restrict the Kantorovich metric to a sub-collection of Borel probability measures on $Y$, where it is well-defined. The intent is to find a sub-collection of measures such that the resulting metric space is complete, and such that $T$ restricts to a map on this sub-collection. To our knowledge, there are two choices for a sub-collection which have been studied in the literature. Hutchinson suggested (see \cite{Hutchinson}) that $(M_{\text{loc}}(Y),H)$ constitutes a complete metric space. In a later paper, A. Kravchenko indicated this not to be true, and showed that $(M(Y), H)$ is a complete metric space (see \cite{Kravchenko}). 

\end{enumerate}

The following theorem assures the existence and uniqueness of a measure satisfying equation (\ref{hutmeasure}). It can be shown that the support of this measure is the attractor set $X$. 

\begin{thm} \cite{Kravchenko} [Kravchenko] The map $T: M(Y) \rightarrow M(Y)$ given by 

$$T(\nu) = \sum_{i=0}^{N-1} \frac{1}{N}\nu(\sigma_i^{-1}(\cdot)),$$

\noindent is a Lipschitz contraction on the complete metric space $(M(Y), H)$. By the Contraction Mapping Principle, there exists a unique Borel probability measure $\mu \in M(Y)$ such that $T(\mu) = \mu$, which is equation (\ref{hutmeasure}). 
\end{thm}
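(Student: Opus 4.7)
My plan is to decompose the theorem into three verifications: (i) $T$ maps $M(Y)$ into itself, (ii) $T$ is a strict contraction with respect to $H$, and (iii) the Contraction Mapping Principle then delivers the unique fixed point. Since the excerpt already attributes the completeness of $(M(Y), H)$ to Kravchenko, I will take that as given and focus on the well-definedness and contraction estimate. The engine throughout is the standard change of variables identity $\int g \, d(\nu \circ \sigma_i^{-1}) = \int (g \circ \sigma_i) \, d\nu$, combined with the fact that the composition of Lipschitz maps is Lipschitz, with constants that multiply.

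For (i), given $\nu \in M(Y)$, I want $\int |f| \, d(T\nu) < \infty$ for every $f \in \text{Lip}(Y)$. Using the change of variables,
\[
\int |f| \, d(T\nu) = \sum_{i=0}^{N-1} \frac{1}{N} \int |f \circ \sigma_i| \, d\nu.
\]
Each $f \circ \sigma_i$ lies in $\text{Lip}(Y)$ because $\sigma_i$ is a Lipschitz contraction, so the integral on the right is finite by hypothesis on $\nu$. A quick computation with $f \equiv 1$ also shows $T\nu(Y) = 1$, so $T\nu \in M(Y)$.

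For (ii), let $\mu, \nu \in M(Y)$ and let $f \in \text{Lip}_1(Y)$. Writing $r_i \in (0,1)$ for the Lipschitz constant of $\sigma_i$ and setting $r = \max_i r_i < 1$, the function $f \circ \sigma_i$ is $r_i$-Lipschitz, so $(f \circ \sigma_i)/r_i \in \text{Lip}_1(Y)$. Applying the change of variables and then the definition of $H$ to this rescaled test function gives
\[
\left| \int f \, d(T\mu) - \int f \, d(T\nu) \right|
\leq \sum_{i=0}^{N-1} \frac{1}{N}\left| \int (f\circ\sigma_i)\, d\mu - \int (f\circ\sigma_i)\, d\nu\right|
\leq \left(\frac{1}{N}\sum_{i=0}^{N-1} r_i\right) H(\mu,\nu),
\]
and taking the supremum over $f \in \text{Lip}_1(Y)$ yields $H(T\mu, T\nu) \leq r\, H(\mu,\nu)$ with a contraction constant strictly less than $1$. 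Step (iii) is then immediate: $M(Y)$ is nonempty (it contains every Dirac mass, since Lipschitz functions are finite at points), so the Contraction Mapping Principle on the complete metric space $(M(Y), H)$ produces a unique $\mu \in M(Y)$ with $T\mu = \mu$, which is precisely equation (\ref{hutmeasure}).

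The only nontrivial point is the bookkeeping in (ii): one must be careful that dividing the test function by $r_i$ is legitimate (it is, since $r_i > 0$) and that the change of variables identity applies to the signed, not necessarily nonnegative, integrand $f \circ \sigma_i$. The integrability required to justify this is exactly what (i) guarantees for $\nu \in M(Y)$, so the main obstacle is really just making sure the membership in $M(Y)$ and the change of variables are invoked in the correct order; no analytic subtlety beyond that is needed.
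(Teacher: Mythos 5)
The paper does not prove this theorem; it is quoted as a background result and attributed entirely to Kravchenko, so there is no in-paper argument to compare against. Your proof is correct and is the standard one: the change-of-variables identity $\int g\, d(\nu\circ\sigma_i^{-1}) = \int (g\circ\sigma_i)\, d\nu$ handles both the stability of $M(Y)$ under $T$ and the contraction estimate, and rescaling $f\circ\sigma_i$ by its Lipschitz constant $r_i$ to land back in $\text{Lip}_1(Y)$ gives $H(T\mu,T\nu)\le \bigl(\tfrac{1}{N}\sum_i r_i\bigr) H(\mu,\nu) \le r\, H(\mu,\nu)$ with $r=\max_i r_i<1$, after which the Contraction Mapping Principle on the complete space $(M(Y),H)$ (taken as given, as is reasonable here) finishes the argument. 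The only caveats are the ones you already flag — positivity of $r_i$ and integrability of the signed integrand $f\circ\sigma_i$ — so the proposal stands as a complete proof of what the paper merely cites.
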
 

Although $M_{\text{loc}}(Y)$ does not constitute a complete metric space in the $H$ metric, it is interesting to investigate its relationship to $M(Y)$ with respect to the $H$ metric. Indeed, the first result of the present paper will be to show that the metric space completion of $M_{\text{loc}}(Y)$ is $M(Y)$. We will follow this result by examining a bounded version of the Kantorovich metric, which we denote by $MH$, and is given by

$$MH(\mu, \nu) = \sup \left\{ \left| \int_Y f d\mu - \int_Y f d\nu \right| : f \in \text{Lip}_1(Y) \text{ and } ||f||_{\infty} \leq 1 \right \}.$$

\noindent By conglomerating a result of Kravchenko with a previously known result about the weak topology on $Q(Y)$, we will be able to observe that $(Q(Y), MH)$ is a complete metric space. Moreover, we will show that $(Q(Y), MH)$ is the metric space completion of $(M_{\text{loc}}(Y),MH)$. 

\begin{rek} The advantage of the $MH$ metric is that it is well defined on $Q(Y)$; that is, there is no need to restrict the metric to a sub-collection of Borel probability measures on $Y$. However, the reason that the $MH$ metric is not used in the study of iterated function systems is that the condition $||f||_{\infty} \leq 1$ prevents $T$ from being a Lipschitz contraction in the $MH$ metric. 

\end{rek}

We now proceed to the functional analytic setting, with the goal of discussing a generalization of the Hutchinson measure to an operator-valued measure. Consider the Hilbert space $L^2(X, \mu)$, where $X \subseteq Y$ is the attractor set of the IFS $\mathcal{S} = \{ \sigma_0, ..., \sigma_{N-1}\}$, and $\mu$ is the unique Borel probability measure on $Y$ satisfying equation (\ref{hutmeasure}), whose support is $X$. We first consider the case that equation (\ref{fractal}) is a disjoint union. We further assume that there exists a Borel measurable function $\sigma: X \rightarrow X$ such that $\sigma \circ \sigma_i = \text{id}_X$, for all $0 \leq i \leq N-1$. We provide a standard example for the above scenario:  

\begin{itemize} 

\item Let $X = \text{Cantor Set} \subseteq [0,1]$, with the standard metric on $\mathbb{R}$.

\item Let $\sigma_0(x) = \frac{1}{3}x$ and $\sigma_1(x) = \frac{1}{3}x + \frac{2}{3}$. 

\item Let $\sigma(x) = 3x \text{ mod } 1$. 

\end{itemize} 

\noindent Under these assumptions, define

\begin{center} $S_i: L^2(X,\mu) \rightarrow L^2(X,\mu)$ by $\displaystyle{\phi \mapsto (\phi \circ \sigma) \sqrt{N} {\bf{1}}_{{\sigma_i(X)}}}$ \end{center} 

\noindent for all $i = 0, ..., N-1$, and its adjoint

\smallskip

\begin{center} $S_i^*: L^2(X,\mu) \rightarrow L^2(X, \mu)$ by $\displaystyle{\phi \mapsto \frac{1}{\sqrt{N}} (\phi \circ \sigma_i)}$ \end{center} 

\noindent for all $i = 0, ..., N-1$.  This leads to the following result due to P. Jorgensen.  

\begin{thm} \cite{Jorgensen3} [Jorgensen] \label{S} The maps $\{S_i: 0 \leq i \leq N-1\}$ are isometries, and the maps $\{S_i^* : 0 \leq i \leq N-1\}$ are their adjoints.  Moreover, these maps and their adjoints satisfy the Cuntz relations:

\begin{enumerate} 

\item $\displaystyle{\sum_{i=0}^{N-1} S_iS_i^* =  {\bf{1}}_{\mathcal{H}}}$ \label{Cuntz1}

\item $\displaystyle{S_i^*S_j = \delta_{i,j}{\bf{1}}_{\mathcal{H}}}$ \label{Cuntz2} where $0 \leq i,j \leq N-1$.

\end{enumerate}

\end{thm}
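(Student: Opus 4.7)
The plan is to extract from the fixed point relation (\ref{hutmeasure}) its integral form, namely
\[
\int_X f\, d\mu = \frac{1}{N} \sum_{i=0}^{N-1} \int_X f \circ \sigma_i \, d\mu
\]
for bounded Borel $f$, and then use this identity, together with disjointness of the union $X = \bigsqcup_i \sigma_i(X)$ and the left-inverse relation $\sigma \circ \sigma_i = \text{id}_X$, to verify each claim directly.

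First I would check that each $S_i$ is an isometry. Applying the identity above to $f(x) = \mathbf{1}_{\sigma_i(X)}(x)\,|\phi(\sigma(x))|^{2}$, every term in the sum with $j \neq i$ vanishes because $\sigma_j(X) \cap \sigma_i(X) = \emptyset$, while the $j=i$ term reduces via $\sigma \circ \sigma_i = \text{id}_X$ to $\int |\phi|^2\, d\mu$. Multiplying by the factor $N$ coming from the definition of $S_i$ then yields $\|S_i\phi\|^{2} = \|\phi\|^{2}$.

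Next I would show that the proposed $S_i^*$ really is the Hilbert-space adjoint by expanding $\langle S_i \phi, \psi \rangle$ and applying the same integral identity to $f(x) = \mathbf{1}_{\sigma_i(X)}(x)\,\phi(\sigma(x))\,\overline{\psi(x)}$; again only the $j=i$ term survives, and $\sigma \circ \sigma_i = \text{id}_X$ collapses it to $\frac{1}{\sqrt{N}}\int \phi\, \overline{\psi \circ \sigma_i}\, d\mu = \langle \phi, S_i^*\psi\rangle$.

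For the Cuntz relations, direct composition is straightforward. Computing $S_i^* S_j \phi$ gives $\phi \cdot (\mathbf{1}_{\sigma_j(X)} \circ \sigma_i)$; this indicator equals $1$ identically when $i=j$ and vanishes identically when $i\neq j$ by disjointness, yielding relation (2). Computing $S_i S_i^* \phi$ gives $(\phi \circ \sigma_i \circ \sigma)\,\mathbf{1}_{\sigma_i(X)}$, and on $\sigma_i(X)$ the map $\sigma$ is a one-sided inverse to $\sigma_i$, so $S_iS_i^*\phi = \phi\cdot \mathbf{1}_{\sigma_i(X)}$. Summing over $i$ and invoking $X = \bigsqcup_i \sigma_i(X)$ produces $\sum_i S_iS_i^* = \mathbf{1}_{\mathcal{H}}$, establishing relation (1).

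The only subtle point, which I would pay careful attention to, is the passage from the set-theoretic fixed-point identity (\ref{hutmeasure}) to its integral version and the correct handling of $\sigma$ on $\sigma_i(X)$: since $\sigma$ is only a \emph{left} inverse globally and the $\sigma_i$ are injective on $X$ (because they admit the common left inverse $\sigma$), one must restrict attention to the set $\sigma_i(X)$ when invoking $\sigma_i \circ \sigma = \text{id}$. Once disjointness is used to annihilate the off-diagonal terms, all remaining manipulations are routine.
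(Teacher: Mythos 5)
Your proof is correct. Note, though, that the paper does not actually prove Theorem \ref{S}; it is quoted from Jorgensen \cite{Jorgensen3} without proof, so there is no in-paper argument to compare against line by line. The closest analogue is the paper's proof of Theorem \ref{Kornelson}, which establishes $\sum_i F_i^*F_i = \text{id}_{\mathcal{H}}$ for a general IFS using exactly the tool you lead with: the integral form $\int_X f\,d\mu = \frac{1}{N}\sum_i \int_X f\circ\sigma_i\,d\mu$ of the fixed-point relation (\ref{hutmeasure}). Your argument extends that computation in the natural way, adding the two ingredients the paper identifies as special to the non-overlapping case --- disjointness of $X=\bigcup_i\sigma_i(X)$ to kill the off-diagonal terms (giving $S_i^*S_j=\delta_{ij}\mathbf{1}_{\mathcal{H}}$) and the left inverse $\sigma$ to collapse $\sigma_i\circ\sigma$ to the identity on $\sigma_i(X)$ (giving $S_iS_i^*=M_{\mathbf{1}_{\sigma_i(X)}}$ and hence relation (1)). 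You also correctly flag the one genuinely delicate point, namely that $\sigma_i\circ\sigma=\text{id}$ only holds on $\sigma_i(X)$. The only detail left tacit is that the same integral identity is what shows $\phi\mapsto(\phi\circ\sigma)\mathbf{1}_{\sigma_i(X)}$ is well defined on $L^2$-equivalence classes, but your isometry computation already supplies that.
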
 

\begin{rek} Another way to rephrase the above theorem is to say that the Hilbert space $L^2(X, \mu)$ admits a representation of the Cuntz algebra, $\mathcal{O}_N$, on $N$ generators. 

\end{rek}

Let $\Gamma_N = \{0,..., N-1\}$.  For $k \in \mathbb{Z}_{+}$, let $\Gamma_N^k = \Gamma_N \times ... \times  \Gamma_N$, where the product is $k$ times.  If $a = (a_1,...,a_k) \in \Gamma_N^k$, where $a_j \in \{0,1,..., N-1\}$ for $1 \leq j \leq k$, define 

\begin{center} $A_k(a) = \sigma_{a_1} \circ ... \circ \sigma_{a_k} (X)$. \end{center} 

\noindent Using that equation (\ref{fractal}) is a disjoint union, we conclude that $\{A_k(a)\}_{a \in \Gamma_N^k}$ partitions $X$ for all $k \in \mathbb{Z}_+$.  For $k \in \mathbb{Z}_+$ and $a = (a_1,..., a_k) \in \Gamma_N^k$ define

\begin{center} $P_k(a) = S_aS_a^*$, \end{center}

\noindent where $S_a = S_{a_1} \circ ... \circ S_{a_k}$. 

\begin{rek} \label{mult} One can show that $P_k(a) = M_{ {\bf{1}}_{A_k(a)}},$ where $M_{ {\bf{1}}_{A_k(a)}}: L^2(X, \mu) \rightarrow L^2(X, \mu)$ is given by $f \in L^2(X,\mu) \mapsto {\bf{1}}_{A_k(a)} f.$ 
\end{rek}

We now recall two definitions. 

\begin{defi} Let $\mathcal{H}$ be a Hilbert space. If $\mathcal{B}(\mathcal{H})$ denotes the $C^*$-algebra of bounded operators on $\mathcal{H}$, a projection $P \in \mathcal{B}(\mathcal{H})$ satisfies $P^* = P$ (self-adjoint) and $P^2 = P$ (idempotent).

\end{defi}

In view of Remark \ref{mult}, note that $P_k(a)$ is a projection in $\mathcal{B}(L^2(X, \mu))$. For the next definition, denote the empty set by $\emptyset$. 

\begin{defi} Let $(X, \mathscr{B}(X))$ be a measure space, and let $\mathcal{H}$ be a Hilbert space. A projection-valued measure with respect to the pair $(X,\mathcal{H})$ is a map $F: \mathscr{B}(X) \rightarrow \mathcal{B}(\mathcal{H})$ such that:

\begin{itemize} 

\item $F(\Delta)$ is a projection in $\mathcal{B}(\mathcal{H})$ for all $\Delta \in \mathscr{B}(X)$;

\item $F(\emptyset) = 0$ and $F(X) = \text{id}_{\mathcal{H}}$ (the identity operator on $\mathcal{H}$);

\item $F(\Delta_1 \cap \Delta_2) = F(\Delta_1)F(\Delta_2)$ for all $\Delta_1, \Delta_2 \in \mathscr{B}(X)$ (where the product operation $F(\Delta_1)F(\Delta_2)$ is operator composition in $\mathcal{B}(\mathcal{H}))$;

\item If $\{ \Delta_n \}_{n=1}^{\infty}$ is a sequence of pairwise disjoint sets in $\mathscr{B}(X)$, and if $g,h \in \mathcal{H}$, then
$$ \left \langle F\left( \bigcup_{n=1}^{\infty} \Delta_n \right)g , h \right \rangle = \sum_{n=1}^{\infty} \langle F(\Delta_n)g, h \rangle.$$

\end{itemize}

\end{defi}

With this definition, we have the following result of Jorgensen. 

\begin{thm} \cite{Jorgensen2} \cite{Jorgensen} [Jorgensen] \label{uniquepvm} There exists a unique projection-valued measure, $E(\cdot)$, defined on the Borel subsets of $X$, $\mathscr{B}(X)$, taking values in the projections on $L^2(X, \mu)$ such that, 

\begin{enumerate} 

\item $E(\cdot) = \sum_{i=0}^{N-1} S_i E(\sigma_i^{-1}(\cdot)) S_i^*$, and 

\item $E(A_k(a)) = P_k(a)$ for all $k \in \mathbb{Z}_+$ and $a \in \Gamma_N^k$.  

\end{enumerate}

\end{thm}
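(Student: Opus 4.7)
My plan is to prove existence constructively by writing down the natural candidate for $E$ and verifying the two required properties directly, and then to prove uniqueness by a standard Carathéodory-style extension argument. Remark~\ref{mult} identifies $P_k(a)$ as the multiplication operator $M_{\mathbf{1}_{A_k(a)}}$, which strongly suggests the definition
$$E(\Delta) := M_{\mathbf{1}_\Delta}, \qquad \Delta \in \mathscr{B}(X).$$
Routine checks confirm this is a PVM: $M_{\mathbf{1}_\Delta}^\ast = M_{\mathbf{1}_\Delta} = M_{\mathbf{1}_\Delta}^2$, the multiplicativity $E(\Delta_1 \cap \Delta_2) = E(\Delta_1)E(\Delta_2)$ reduces to the pointwise identity $\mathbf{1}_{\Delta_1 \cap \Delta_2} = \mathbf{1}_{\Delta_1}\mathbf{1}_{\Delta_2}$, and weak countable additivity of $\langle E(\cdot)g, h\rangle = \int \mathbf{1}_{(\cdot)}(x) \overline{h(x)} g(x)\, d\mu(x)$ is the dominated convergence theorem. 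Property~(2) is then immediate from Remark~\ref{mult}.

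The technical core is verifying the covariance relation~(1), and here both the disjointness of the union in~(\ref{fractal}) and the left-inverse identity $\sigma \circ \sigma_i = \mathrm{id}_X$ are essential. Unfolding the definitions of $S_i$ and $S_i^\ast$, I would compute, for $\phi \in L^2(X,\mu)$ and $x \in X$,
$$\bigl(S_i M_{\mathbf{1}_{\sigma_i^{-1}(\Delta)}} S_i^\ast \phi\bigr)(x) \;=\; \mathbf{1}_{\sigma_i(X)}(x)\, \mathbf{1}_{\sigma_i^{-1}(\Delta)}(\sigma(x))\, \phi(\sigma_i(\sigma(x))).$$
On $\sigma_i(X)$, disjointness forces $\sigma$ to invert $\sigma_i$ pointwise, so $\sigma_i(\sigma(x)) = x$ and $\mathbf{1}_{\sigma_i^{-1}(\Delta)}(\sigma(x)) = \mathbf{1}_\Delta(x)$; off $\sigma_i(X)$ the factor $\mathbf{1}_{\sigma_i(X)}(x)$ already vanishes. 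The right-hand side therefore collapses to $\mathbf{1}_{\sigma_i(X) \cap \Delta}(x)\phi(x)$, and summing over $i$ with $\sum_i \mathbf{1}_{\sigma_i(X)} = \mathbf{1}_X$ (the disjoint union again) yields $\sum_i S_i E(\sigma_i^{-1}(\Delta)) S_i^\ast = M_{\mathbf{1}_\Delta} = E(\Delta)$.

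For uniqueness, suppose $E'$ is another PVM on $\mathscr{B}(X)$ satisfying (1) and (2). Because each $\sigma_i$ has Lipschitz constant $r_i < 1$, one has $\mathrm{diam}(A_k(a)) \leq (\max_i r_i)^k \mathrm{diam}(X) \to 0$ uniformly in $a$ as $k \to \infty$, so the nested partitions $\{A_k(a)\}_{a \in \Gamma_N^k}$ generate $\mathscr{B}(X)$. Property~(2) yields $E(A_k(a)) = E'(A_k(a))$ for every $k$ and $a$, and by finite additivity $E$ and $E'$ agree on the algebra of finite unions of cylinders. Fixing $g, h \in L^2(X,\mu)$, $\langle E(\cdot)g, h\rangle$ and $\langle E'(\cdot)g, h\rangle$ are countably additive complex Borel measures on $X$ agreeing on a generating algebra, so by the uniqueness portion of Carathéodory's extension theorem they agree on all of $\mathscr{B}(X)$. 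Varying $g$ and $h$ gives $E = E'$.

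The main obstacle is the bookkeeping in the covariance computation, where the no-overlap hypothesis is used at two distinct places: in reducing $\sigma_i(\sigma(x))$ to $x$ on $\sigma_i(X)$, and in collapsing $\sum_i \mathbf{1}_{\sigma_i(X)}$ to $\mathbf{1}_X$. An alternative in the spirit of the rest of the paper would be to introduce a Kantorovich-type metric on the space of PVMs and realize $E$ as the unique fixed point of the contraction $F \mapsto \sum_i S_i F(\sigma_i^{-1}(\cdot)) S_i^\ast$; the explicit approach above has the merit of producing a transparent formula but relies essentially on the disjoint-union hypothesis, which is precisely what fails in the essential-overlap setting the paper ultimately addresses.
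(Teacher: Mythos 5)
Your proposal is correct, but it proves the theorem by a genuinely different route than the paper does. You construct $E$ explicitly as the canonical multiplication PVM $E(\Delta)=M_{\mathbf{1}_{\Delta}}$, verify the covariance identity by a direct pointwise computation using $\sigma\circ\sigma_i=\mathrm{id}_X$ and the disjointness of $X=\bigcup_i\sigma_i(X)$, and obtain uniqueness from property (2) alone via a $\pi$-system/Carath\'eodory argument on the shrinking cylinder sets $A_k(a)$. The paper's own argument (Theorems \ref{complete} and \ref{U}) goes the other way around: it puts the generalized Kantorovich metric $\rho$ on the space $P(X)$ of projection-valued measures, shows $(P(X),\rho)$ is complete and that $F\mapsto\sum_i S_iF(\sigma_i^{-1}(\cdot))S_i^*$ is a Lipschitz contraction, and invokes the Banach fixed point theorem; property (2) is then derived as a consequence. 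The trade-offs are worth noting. Your construction is more elementary and yields a transparent formula for $E$ (which the paper records only as a remark), but your uniqueness is uniqueness among PVMs satisfying (2), whereas the contraction argument gives the stronger fact that property (1) by itself pins down $E$. More importantly, your computation leans essentially on the disjoint-union hypothesis at exactly the points you identify, so it does not survive the passage to essential overlap, while the fixed-point machinery is precisely what the paper later reuses to produce the POVM $A$ of Corollary \ref{FixedPOVM}, where no multiplication-operator formula is available. One cosmetic point: it is the assumed left-inverse identity $\sigma\circ\sigma_i=\mathrm{id}_X$, rather than disjointness itself, that gives $\sigma_i(\sigma(x))=x$ on $\sigma_i(X)$; disjointness is what allows a single $\sigma$ to serve all $i$ simultaneously. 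This does not affect the validity of your argument.
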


\begin{rek} The projection-valued measure $E(\cdot)$ is the canonical projection-valued measure on the measure space $(X, \mathscr{B}(X), \mu),$ meaning that if $\Delta \in \mathscr{B}(X)$, 

$$E(\Delta) = M_{{\bf{1}}_{\Delta}},$$

\noindent where $M_{{\bf{1}}_{\Delta}}$ is multiplication by ${\bf{1}}_{\Delta}$. 

\end{rek}

Previously, we presented an alternative approach to proving the above theorem (see\cite{Davison} \cite{Davison2} \cite{Davison3}). This is summarized below. Let $P(X)$ be the collection of all projection-valued measures from $\mathscr{B}(X)$ into the projections on $L^2(X, \mu)$. Define a metric $\rho$ on $P(X)$ by 

\begin{equation} \label{genKant} \rho(E,F) = \sup_{f \in \text{Lip}_1(X)}\left\{ \left|\left| \int f dE - \int f dF \right|\right| \right\}, \end{equation}

\noindent where $||\cdot||$ denotes the operator norm in $\mathcal{B}(\mathcal{H})$, and $E$ and $F$ are arbitrary members of $P(X)$.  This is called the generalized Kantorovich metric. 

At this juncture, we would like to note that the generalized Kantorovich metric defined above in equation (see (\ref{genKant})) has been previously defined by R.F. Werner in the setting of mathematical physics, namely in the area of quantum theory (see \cite{Werner}). Indeed, a projection-valued measures is a more specific instance of a positive operator-valued measure (POVM), which is also called an observable in physics. Werner introduces the generalized Kantorovich metric as a tool for studying the position and momentum observables, which are central objects of study in quantum theory. 

In the present paper, and in our related paper (\cite{Davison}), we develop new properties of the generalized Kantorovich metric, and discuss its application to iterated function systems.

\begin{thm} \cite{Davison} \cite{Davison3} [Davison] \label{complete} $(P(X), \rho)$ is a complete metric space. \end{thm}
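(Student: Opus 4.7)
The plan is to take a Cauchy sequence $\{E_n\}_{n=1}^{\infty}$ in $(P(X), \rho)$ and construct a projection-valued measure $E \in P(X)$ to which it converges in the $\rho$-metric. First I would verify that $\rho$ is finite-valued: since $X$ is compact, $\text{diam}(X) < \infty$; and because $E(X) = F(X) = \text{id}$, subtracting a constant from $f \in \text{Lip}_1(X)$ does not affect $\int f\,dE - \int f\,dF$, so one may assume $f(x_0) = 0$ for a fixed $x_0$, giving $\|f\|_{\infty} \leq \text{diam}(X)$. The Cauchy condition then ensures that, for each $f \in \text{Lip}_1(X)$, the operators $\int f\,dE_n$ form a Cauchy sequence in $\mathcal{B}(L^2(X,\mu))$ and hence converge to some $\Phi(f)$; by homogeneity, $\Phi$ extends linearly to all of $\text{Lip}(X)$.

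Next I would promote $\Phi$ to a $*$-homomorphism on $C(X)$. Because each $E_n$ is projection-valued, the functional calculus gives $\|\int f\,dE_n\| \leq \|f\|_{\infty}$, which passes to the limit as $\|\Phi(f)\| \leq \|f\|_{\infty}$. Since $X$ is a compact metric space, Stone-Weierstrass shows that $\text{Lip}(X)$ is uniformly dense in $C(X)$, so $\Phi$ extends by continuity to a bounded linear map $\Phi: C(X) \to \mathcal{B}(L^2(X,\mu))$ sending constants to scalar multiples of the identity and real-valued functions to self-adjoint operators. For $f, g \in \text{Lip}(X)$ the product $fg$ is again Lipschitz, and the PVM property of each $E_n$ gives
$$\int fg\,dE_n = \left( \int f\,dE_n \right)\left( \int g\,dE_n \right);$$
passing to operator-norm limits on both sides, and then extending from $\text{Lip}(X)$ to $C(X)$ by density, yields $\Phi(fg) = \Phi(f)\Phi(g)$. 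Thus $\Phi$ is a unital $*$-homomorphism from the commutative unital $C^*$-algebra $C(X)$ into $\mathcal{B}(L^2(X,\mu))$, and the spectral theorem supplies the desired PVM $E \in P(X)$ with $\Phi(f) = \int f\,dE$ for every $f \in C(X)$.

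To conclude, I would verify $\rho(E_n, E) \to 0$. For a fixed $f \in \text{Lip}_1(X)$, continuity of the operator norm together with $\|\int f\,dE_m - \int f\,dE_n\| \leq \rho(E_m, E_n)$ gives
$$\left\|\int f\,dE - \int f\,dE_n\right\| = \lim_{m \to \infty} \left\|\int f\,dE_m - \int f\,dE_n\right\| \leq \limsup_{m \to \infty} \rho(E_m, E_n),$$
and taking the supremum over $f \in \text{Lip}_1(X)$ yields $\rho(E, E_n) \leq \limsup_{m} \rho(E_m, E_n) \to 0$. The main obstacle is the projection-valued nature of the limit: a naive limit of PVMs might a priori only be a POVM, so the argument hinges on identifying $\Phi$ as a genuine $*$-homomorphism rather than merely a positive linear map. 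This in turn rests on the multiplicativity identity for PVM integrals and on the uniform operator-norm bound $\|\Phi(f)\| \leq \|f\|_{\infty}$, which together allow a clean extension from $\text{Lip}(X)$ to all of $C(X)$ and invocation of the spectral theorem.
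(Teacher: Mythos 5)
Your argument is correct, but it takes a genuinely different route from the one used in the paper. (The paper cites the proof of this theorem to \cite{Davison}, and reproduces the same strategy in detail for the POVM analogue, Theorem \ref{SXcomplete}.) You package the limits $\lim_n \int f\,dE_n$ into a single map $\Phi$ on $\mathrm{Lip}(X)$, extend it via the uniform bound $\|\Phi(f)\|\le\|f\|_\infty$ and Stone--Weierstrass density to a unital $*$-homomorphism of $C(X)$, and then invoke the spectral theorem for representations of commutative $C^*$-algebras to produce the limiting projection-valued measure in one stroke. The paper instead works vector-pair by vector-pair: for each $g,h\in\mathcal{H}$ it defines the bounded linear functional $f\mapsto\lim_n\int f\,d(E_n)_{g,h}$, applies the Riesz representation theorem to obtain complex measures $\mu_{g,h}$, proves that $[g,h]\mapsto\mu_{g,h}(\Delta)$ is a bounded sesquilinear form for each Borel set $\Delta$ (a step requiring a regularity argument to pass from closed sets to all Borel sets), and then uses the bounded-sesquilinear-form theorem to define $E(\Delta)$ directly, verifying the measure axioms by hand. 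Your route buys exactly the point you flag as the main obstacle: projection-valuedness of the limit comes for free from multiplicativity of $\Phi$ together with the spectral theorem, whereas the set-by-set construction must establish $E(\Delta_1\cap\Delta_2)=E(\Delta_1)E(\Delta_2)$ separately. What the paper's route buys is generality: it uses only positivity and never multiplicativity, so it transfers verbatim to the space $S(X)$ of positive operator-valued measures, which is the main object of this paper and where no $*$-homomorphism structure is available. Both arguments are valid for the statement at hand; the finiteness and final convergence steps in your write-up coincide with the paper's.
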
 

\begin{thm} \cite{Davison} \cite{Davison2} \cite{Davison3} [Davison] \label{U} The map $U: P(X) \rightarrow P(X)$ given by

$$F(\cdot) \mapsto \sum_{i=0}^{N-1} S_i F(\sigma_i^{-1}(\cdot)) S_i^*$$

\noindent is a Lipschitz contraction on the $(P(X), \rho)$ metric space. By the Contraction Mapping Principle, there exists a unique projection-valued measure, $E \in P(X),$ satisfying part $(1)$ of Theorem \ref{uniquepvm}.  Part $(2)$ of Theorem \ref{uniquepvm} follows as a consequence. 

\end{thm}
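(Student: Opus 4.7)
The plan is to invoke the Contraction Mapping Principle on the complete metric space $(P(X), \rho)$ from Theorem \ref{complete}, so the substantive work is to verify that $U$ sends $P(X)$ into itself and is a strict contraction. Well-definedness rests squarely on the Cuntz relations from Theorem \ref{S}. Writing $F_i := F(\sigma_i^{-1}(\Delta))$, self-adjointness of $U(F)(\Delta) = \sum_i S_i F_i S_i^*$ is immediate, and idempotence follows because all cross terms vanish via $S_i^* S_j = \delta_{i,j}$:
\[
\left(\sum_i S_i F_i S_i^*\right)\left(\sum_j S_j F_j S_j^*\right) = \sum_i S_i F_i^2 S_i^* = \sum_i S_i F_i S_i^*.
\]
Normalization $U(F)(X) = \mathbf{1}_{\mathcal{H}}$ combines $F(X) = \mathbf{1}_{\mathcal{H}}$ with $\sum_i S_i S_i^* = \mathbf{1}_{\mathcal{H}}$; finite additivity on disjoint Borel sets is inherited coordinate-wise from $F$, and weak countable additivity follows by applying the same inheritance to matrix coefficients.

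For the contraction estimate, I would first establish the integration identity
\[
\int f \, d(U(E)) = \sum_{i=0}^{N-1} S_i \left(\int (f \circ \sigma_i) \, dE\right) S_i^*,
\]
which holds by definition when $f$ is an indicator and extends to bounded Borel $f$ (in particular to Lip-1 functions on the compact set $X$) by linearity and bounded convergence in the weak operator topology. Subtracting the analogous identity for $F$ yields
\[
\int f \, dU(E) - \int f \, dU(F) = \sum_{i=0}^{N-1} S_i \left(\int (f \circ \sigma_i) \, d(E - F)\right) S_i^*.
\]
If $f \in \text{Lip}_1(X)$ and $r_i < 1$ is the Lipschitz constant of $\sigma_i$, then $r_i^{-1}(f \circ \sigma_i) \in \text{Lip}_1(X)$, so the $i$-th interior operator has norm at most $r_i \rho(E,F)$. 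The key technical estimate is the operator-norm bound
\[
\left\|\sum_{i=0}^{N-1} S_i A_i S_i^*\right\| \leq \max_i \|A_i\|,
\]
which I would prove by expanding $\langle \sum_i S_i A_i S_i^* v, w\rangle = \sum_i \langle A_i S_i^* v, S_i^* w\rangle$, applying Cauchy--Schwarz termwise, and invoking $\sum_i \|S_i^* v\|^2 = \langle v, v\rangle$ (a direct restatement of $\sum_i S_i S_i^* = \mathbf{1}_{\mathcal{H}}$). Combining these steps yields $\rho(U(E), U(F)) \leq r \cdot \rho(E, F)$ with $r = \max_i r_i < 1$. I expect this operator-norm inequality to be the main subtlety: it is the one place where the Cuntz relations must conspire to prevent the contraction constant from inflating under the sum over $i$.

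With $U$ a strict contraction on $(P(X), \rho)$, the unique fixed point $E \in P(X)$ is precisely Part (1) of Theorem \ref{uniquepvm}. For Part (2), I would induct on $k$: the disjoint-union hypothesis on equation (\ref{fractal}) forces $\sigma_i^{-1}(A_k(a))$ to equal $A_{k-1}(a_2, \ldots, a_k)$ when $i = a_1$ and $\emptyset$ otherwise, so the fixed-point equation collapses to $E(A_k(a)) = S_{a_1} E(A_{k-1}(a_2,\ldots,a_k)) S_{a_1}^*$. Iterating down to the base case $E(X) = \mathbf{1}_{\mathcal{H}}$ produces $E(A_k(a)) = S_a S_a^* = P_k(a)$, as required.
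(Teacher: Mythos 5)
Your proposal is correct and, where the paper supplies details (the verification that $U(F)$ remains a projection-valued measure, using part (2) of the Cuntz relations for idempotence and part (1) for $U(F)(X)=\text{id}_{\mathcal{H}}$), it follows exactly the same computations. The paper defers the contraction estimate and Part (2) of Theorem \ref{uniquepvm} to the cited references \cite{Davison}, \cite{Davison2}, \cite{Davison3}; your treatment of these --- reducing the contraction to the bound $\left\| \sum_{i} S_i A_i S_i^* \right\| \leq \max_i \|A_i\|$ via Cauchy--Schwarz together with $\sum_i \|S_i^* v\|^2 = \|v\|^2$, and the induction $E(A_k(a)) = S_{a_1}E(A_{k-1}(a_2,\ldots,a_k))S_{a_1}^*$ (which uses injectivity of the $\sigma_i$ and disjointness of the union in equation (\ref{fractal})) --- is sound and consistent with that approach.
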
 

The fact that $U(F) \in P(X)$ for $F \in P(X)$ depends on the Cuntz relations. For instance, the computation that $U(F)$ is an idempotent relies on part (2) of Theorem \ref{S}. Indeed, if $\Delta \in \mathscr{B}(X)$

\begin{eqnarray*} 
(U(F)(\Delta))^2 & = & \left(\sum_{i=0}^{N-1} S_i F(\sigma_i^{-1}(\Delta))S_i^*\right)^2 \\
			      & = & \sum_{i=0}^{N-1} S_i F(\sigma_i^{-1}(\Delta))S_i^* \sum_{j=0}^{N-1} S_j F(\sigma_j^{-1}(\Delta))S_j^* \\
			      & = & \sum_{i=0}^{N-1} S_i F(\sigma_i^{-1}(\Delta))^2S_i^* \\
			      & = & \sum_{i=0}^{N-1} S_i F(\sigma_i^{-1}(\Delta))S_i^* \\
			      & =& U(F)(\Delta). \end{eqnarray*} 
			     
\noindent More generally, the fact that $U(F)(\Delta_1 \cap \Delta_2) = U(F)(\Delta_1)U(F)(\Delta_2)$ similarly relies on part (2) of Theorem \ref{S}. The fact that $U(F)(X) = \text{id}_{\mathcal{H}}$ relies on part (1) of Theorem \ref{S}. That is,

\begin{eqnarray*} 
U(F)(X) & = & \sum_{i=0}^{N-1} S_i F(\sigma_i^{-1}(X))S_i^* \\
			      & = & \sum_{i=0}^{N-1} S_i F(X)S_i^* \\
			      & = & \sum_{i=0}^{N-1} S_i \text{id}_{\mathcal{H}} S_i^* \\
			      & = & \sum_{i=0}^{N-1} S_iS_i^* \\
			      & =& \text{id}_{\mathcal{H}}. \end{eqnarray*} 
			      
\begin{rek} \label{POVM} We see by the above computations that part (2) of Theorem \ref{S} is used to show that $U(F)$ takes values in the projections on $L^2(X, \mu)$. However, if we hypothetically obtain a family of operators $\{S_i\}_{i=0}^{N-1}$ satisfying part (1) of Theorem \ref{S}, and not part (2) of Theorem \ref{S}, the map $U$ will still carry structure. In particular, if $F$ is a POVM, $U(F)$ will also be a POVM. This situation appears when our iterated function system has essential overlap, which we describe below. 
\end{rek}

The construction of the isometries $\{S_i\}_{i=0}^{N-1}$ that satisfy parts (1) and (2) of Theorem \ref{S} depends on the fact that equation (\ref{fractal}) is a disjoint union, and on the existence of a map $\sigma: X \rightarrow X$ satisfying $\sigma \circ \sigma_i = \text{id}_X$ for all $0 \leq i \leq N-1.$ An IFS exhibiting a disjoint union in equation (\ref{fractal}) is an example of a broader class of iterated function systems called iterated function systems with non-essential overlap, meaning that

\begin{equation} \label{noness} \mu(\sigma_i(X) \cap \sigma_j(X)) = 0, \end{equation}

\noindent when $i \neq j$, and where $\mu$ is the Hutchinson measure. Like an iterated function system which is disjoint, an iterated function system with non-essential overlap also admits a representation of the Cuntz algebra, assuming that each member of the IFS is of finite type. This is due to Jorgensen and his collaborators K. Kornelson, and K. Shuman, and is stated below \cite{Kornelson}. 

\begin{defi} \cite{Kornelson} A measurable endomorphism $\tau: X \rightarrow X$ is said to be of finite type if there is a finite partition $E_1, ..., E_k$ of $\tau(X),$ and measurable mappings $\sigma_i: E_i \rightarrow X,$ $i =1,...,k$ such that 

$$\sigma_i \circ \tau |_{E_i} = \text{id}_{E_i}.$$ 

\end{defi} 

\begin{thm} \cite{Kornelson} [Jorgensen, Kornelson, Shuman] Let $\mathcal{S} = \{\sigma_0,...,\sigma_{N-1}\}$ be an IFS with attractor set $X$, and let $\mu$ be the corresponding Hutchinson measure. Further suppose that each member of the IFS is of finite type. For each $i = 0, ..., N-1,$ define $F_i: L^2(X,\mu) \rightarrow L^2(X, \mu)$ given by 

$$\displaystyle{\phi \mapsto \frac{1}{\sqrt{N}} (\phi \circ \sigma_i)}.$$  

\noindent The family of operators $\{S_i := F_i^*\}_{i=0}^{N-1}$ define a representation of the Cuntz algebra if and only if the IFS has non-essential overlap. 

\end{thm}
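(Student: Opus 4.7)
My strategy is to compute $F_i^*$ and $F_iF_j^*$ explicitly via the Radon--Nikodym derivative $h_i := d((\sigma_i)_*\mu)/d\mu$, and then read off what the Cuntz relations force on the overlap structure of the sets $\{\sigma_i(X)\}$. The Hutchinson fixed point equation $\mu = \frac{1}{N}\sum_i (\sigma_i)_*\mu$ yields $(\sigma_i)_*\mu \leq N\mu$, so $h_i \in L^\infty(\mu)$ with $0 \leq h_i \leq N$ a.e., $h_i$ vanishes off $\sigma_i(X)$, $\int h_i\,d\mu = 1$, and $\sum_i h_i = N$ a.e. As a warm-up, the change of variables $\int |\phi\circ\sigma_i|^2\,d\mu = \int |\phi|^2 h_i\,d\mu$ summed over $i$ gives $\sum_i F_i^*F_i = I$ \emph{unconditionally}, so the full content of the theorem is carried by the remaining Cuntz relation $F_iF_j^* = \delta_{ij}I$.

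Next I would derive the explicit formula for $F_i^*$. Each $\sigma_i$ is a strict contraction, hence injective; the finite-type hypothesis supplies the measurability of $\sigma_i^{-1}\colon\sigma_i(X)\to X$. A direct change of variables in $\langle F_i\phi,\psi\rangle$ then yields
\[ F_i^*\psi(y) = \frac{1}{\sqrt{N}}\,h_i(y)\,\psi(\sigma_i^{-1}(y))\,\mathbf{1}_{\sigma_i(X)}(y), \]
and therefore
\[ F_iF_j^*\psi(x) = \frac{1}{N}\,h_j(\sigma_i(x))\,\psi(\sigma_j^{-1}(\sigma_i(x)))\,\mathbf{1}_{\sigma_j(X)}(\sigma_i(x)). \]
For the ($\Leftarrow$) direction, non-essential overlap forces $\{\sigma_i(X)\}$ to partition $X$ mod $\mu$; combined with $\sum h_i = N$ and $\operatorname{supp}(h_i)\subseteq\sigma_i(X)$ this gives $h_i = N\,\mathbf{1}_{\sigma_i(X)}$, and substituting into the formula above recovers $F_iF_i^* = I$ and $F_iF_j^* = 0$ for $i\neq j$.

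For ($\Rightarrow$), suppose the Cuntz relations hold. The diagonal relation $F_iF_i^* = I$ reads $(h_i\circ\sigma_i)/N = 1$ $\mu$-a.e.; integrating and using $\int(h_i\circ\sigma_i)\,d\mu = \int h_i^2\,d\mu$ gives $\int h_i^2\,d\mu = N = N\int h_i\,d\mu$. Since $h_i \leq N$ implies $h_i^2 \leq N h_i$ pointwise, this equality forces $h_i(N-h_i) = 0$ $\mu$-a.e., i.e.\ $h_i\in\{0,N\}$ a.e. Setting $A_i := \{h_i = N\}\subseteq\sigma_i(X)$, the relation $\sum h_i = N$ forces $\{A_i\}$ to partition $X$ mod $\mu$. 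The off-diagonal relation $F_iF_j^* = 0$ for $i\neq j$ yields $h_j(\sigma_i(x)) = 0$ $\mu$-a.e., i.e.\ $h_j = 0$ on $\sigma_i(X)\cap\sigma_j(X)$ mod $\mu$; since $h_j = N$ on $A_j$, this gives $\mu(A_j\cap\sigma_i(X)) = 0$ for $j\neq i$, and consequently $\sigma_i(X) = A_i$ mod $\mu$. Non-essential overlap follows at once: $\mu(\sigma_i(X)\cap\sigma_j(X)) = \mu(A_i\cap A_j) = 0$ for $i\neq j$.

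The main obstacle is the step that extracts the sharp pointwise dichotomy $h_i\in\{0,N\}$ from the operator identity $F_iF_i^* = I$; this is where the full content of the Hutchinson equation (both the normalization $\int h_i\,d\mu = 1$ and the pointwise bound $h_i\leq N$) has to be used together. Everything else is change-of-variables bookkeeping, with the finite-type assumption serving only to secure measurability of the inverses $\sigma_i^{-1}$ used in the explicit formulas.
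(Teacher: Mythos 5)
First, a point of reference: the paper does not actually prove this theorem --- it is quoted from \cite{Kornelson} without proof, and the only argument the paper supplies in this vicinity is the proof of the unconditional identity $\sum_{i} F_i^*F_i = \mathrm{id}_{\mathcal{H}}$ (Theorem \ref{Kornelson}), which your ``warm-up'' reproduces correctly by the same change-of-variables computation. Your framework via the densities $h_i = d((\sigma_i)_*\mu)/d\mu$ is sound, your formula for $F_i^*$ is correct, the backward direction works, and the extraction of the dichotomy $h_i \in \{0,N\}$ $\mu$-a.e.\ from $F_iF_i^* = I$ (via $\int h_i^2\,d\mu = N = N\int h_i\,d\mu$ and $h_i^2 \le N h_i$) is a genuinely nice step.

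However, the forward direction has a real gap at the off-diagonal step. The statement ``$h_j(\sigma_i(x)) = 0$ for $\mu$-a.e.\ $x$'' is a statement about $h_j$ almost everywhere with respect to the \emph{pushforward} $(\sigma_i)_*\mu = h_i\,d\mu$, not with respect to $\mu$ restricted to $\sigma_i(X)$; the two agree only where $h_i > 0$. So what you actually obtain is $h_i h_j = 0$ $\mu$-a.e., i.e.\ $\mu(A_i\cap A_j) = 0$ --- which you already knew from the diagonal relations together with $\sum_k h_k = N$. The off-diagonal relations therefore add nothing, and your argument establishes only that the sets $A_i = \{h_i = N\}$ partition $X$ mod $\mu$ with $A_i \subseteq \sigma_i(X)$. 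This does not rule out $\mu(\sigma_i(X)\setminus A_i) > 0$: a priori $\sigma_i(X)$ can carry $\mu$-mass on which $h_i$ vanishes (mass contributed by the other branches), and that is exactly where a positive overlap $\mu(\sigma_i(X)\cap\sigma_j(X))$ could hide. Indeed, the Hutchinson equation gives $\mu(B) = \sum_j \mu(A_j \cap B)$ for every Borel $B$, hence $\mu(\sigma_i(X)\setminus A_i) = \sum_{j\ne i}\mu(A_j\cap\sigma_i(X))$; the missing step is precisely to show $\mu(A_j\cap\sigma_i(X)) = 0$ for $j\ne i$, i.e.\ that $h_i > 0$ $\mu$-a.e.\ on $\sigma_i(X)$. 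That requires an input beyond the pointwise algebra of the $h_i$'s (for instance an analysis of $\mu = \pi_* P$ through the coding map, or the argument of \cite{Kornelson} itself), so as written your proof of the forward implication does not close.
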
 

In view of this result, an additional question to ask is what can be said about an IFS , $\mathcal{S} = \{\sigma_0,...,\sigma_{N-1}\}$, that has essential overlap, meaning that equation $\mu(\sigma_i(X) \cap \sigma_j(X)) > 0,$ for some $i \neq j$. More generally, what can be said about an arbitrary IFS, that may or may not exhibit non-essential overlap? This situation was also studied by Jorgensen and his collaborators in \cite{Kornelson}. In particular, we can still define the operators

\begin{center} $F_i: L^2(X,\mu) \rightarrow L^2(X, \mu)$ given by $\displaystyle{\phi \mapsto \frac{1}{\sqrt{N}} (\phi \circ \sigma_i)}$ \end{center} 

\noindent for all $i = 0, ..., N-1$.

\begin{thm} \cite{Kornelson} \label{Kornelson} [Jorgensen, Kornelson, Shuman] The family of operators $\{F_i\}_{i=0}^{N-1}$ satisfy the operator identity 

$$\sum_{i=0}^{N-1} F_i^*F_i = \text{id}_{\mathcal{H}}. $$

\end{thm}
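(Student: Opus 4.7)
The plan is to prove the identity by checking that the self-adjoint operator $\sum_{i=0}^{N-1} F_i^*F_i$ induces the same quadratic form on $L^2(X,\mu)$ as the identity, and then invoke self-adjointness to deduce operator equality. The key tool will be the Hutchinson fixed-point relation~(\ref{hutmeasure}) combined with the standard change-of-variables formula for push-forward measures.

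First, I would verify that each $F_i$ is a well-defined bounded operator on $L^2(X,\mu)$. For $\phi \in L^2(X,\mu)$, the change of variables gives
$$\|F_i\phi\|_2^2 = \frac{1}{N}\int_X |\phi \circ \sigma_i|^2\, d\mu = \frac{1}{N}\int_X |\phi|^2\, d(\mu \circ \sigma_i^{-1}).$$
From (\ref{hutmeasure}) we have $\mu = \frac{1}{N}\sum_{j=0}^{N-1} \mu \circ \sigma_j^{-1}$, so in particular $\mu \circ \sigma_i^{-1} \leq N\mu$, which yields $\|F_i\phi\|_2 \leq \|\phi\|_2$. Thus each $F_i \in \mathcal{B}(L^2(X,\mu))$, and the adjoint $F_i^*$ exists.

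Next, summing the identity above over $i$ and using (\ref{hutmeasure}) directly:
$$\sum_{i=0}^{N-1} \langle F_i^*F_i \phi, \phi\rangle = \sum_{i=0}^{N-1} \|F_i\phi\|_2^2 = \int_X |\phi|^2 \, d\Bigl(\tfrac{1}{N}\sum_{i=0}^{N-1} \mu \circ \sigma_i^{-1}\Bigr) = \int_X |\phi|^2\, d\mu = \langle \phi,\phi\rangle.$$
Hence the operator $T := \sum_{i=0}^{N-1} F_i^*F_i - \text{id}_{\mathcal{H}}$ satisfies $\langle T\phi,\phi\rangle = 0$ for all $\phi \in L^2(X,\mu)$. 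Since each $F_i^*F_i$ is self-adjoint (being of the form $A^*A$), so is $T$; for a self-adjoint operator the vanishing of its quadratic form implies $T = 0$, which gives the desired identity $\sum_{i=0}^{N-1} F_i^*F_i = \text{id}_{\mathcal{H}}$.

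I do not expect any real obstacle here, since the statement is essentially an operator-theoretic reformulation of the measure-theoretic fixed-point relation~(\ref{hutmeasure}). The only mild subtlety is justifying the change of variables $\int (\phi \circ \sigma_i)\, d\mu = \int \phi \, d(\mu\circ\sigma_i^{-1})$ for arbitrary $L^2$ functions (not just continuous ones), which follows from the standard measure-theoretic argument: verify it for indicator functions, extend to simple functions by linearity, and pass to $L^2$ by a monotone/dominated convergence argument once the boundedness estimate $\mu \circ \sigma_i^{-1}\leq N\mu$ is in hand.
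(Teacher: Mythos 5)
Your proposal is correct and follows essentially the same route as the paper: both compute $\sum_i \|F_i\phi\|^2$ via the change of variables $\int |\phi|^2\circ\sigma_i\, d\mu = \int |\phi|^2\, d(\mu\circ\sigma_i^{-1})$, invoke the fixed-point relation (\ref{hutmeasure}) to identify the sum with $\|\phi\|^2$, and conclude by self-adjointness that the operator is the identity. Your added verification that each $F_i$ is bounded (via $\mu\circ\sigma_i^{-1}\leq N\mu$) is a harmless refinement that the paper leaves implicit.
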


\begin{proof} We include a proof of this result because this result is fundamental to the main purpose of the present paper. Note that the the Hutchinson measure $\mu$ associated to this IFS has the property that 

$$\frac{1}{N} \sum_{i=0}^{N-1} \int_X |f|^2 \circ \sigma_i d\mu = \int_X |f|^2 d\mu = ||f||^2$$

\noindent for all $f \in L^2(X,\mu).$ Since the operator $\sum_{i=0}^{N-1} F_i^*F_i$ is self-adjoint, in order to show that $\sum_{i=0}^{N-1} F_i^*F_i = \text{id}_{\mathcal{H}},$ it is enough to show that 

$$\left \langle \left( \sum_{i=0}^{N-1} F_i^*F_i \right) f, f \right \rangle = \langle f, f \rangle = ||f||^2,$$

\noindent for all $f \in L^2(X,\mu).$ Accordingly, let $f \in L^2(X, \mu),$ and observe that for all $0 \leq i \leq N-1,$

$$||F_if||^2 = \langle F_i f, F_i f \rangle = \int_X \frac{1}{\sqrt{N}} (f \circ \sigma_i) \overline{\frac{1}{\sqrt{N}} (f \circ \sigma_i)} d\mu = \frac{1}{N} \int_X |f|^2 \circ \sigma_i d\mu.$$

\noindent Therefore, 

$$\left \langle \left( \sum_{i=0}^{N-1} F_i^*F_i \right) f, f \right \rangle = \sum_{i=0}^{N-1} \langle F_i^*F_i f, f, \rangle = \sum_{i=0}^{N-1} ||F_i f ||^2$$

$$ = \frac{1}{N} \sum_{i=0}^{N-1} \int_X |f|^2 \circ \sigma_i d\mu = ||f||^2,$$

\noindent which proves the result. 

\end{proof} 

Consequently, if we define $S_i = F_i^*$, we can rewrite the above operator identity as 

\begin{equation} \label{ident} \sum_{i=0}^{N-1} S_iS_i^* = \text{id}_{\mathcal{H}}, \end{equation} 

\noindent which is exactly part (1) of Theorem \ref{S}.

Referring back to Remark \ref{POVM}, our intent is to generalize the map $U$ from Theorem \ref{U} to the case of an arbitrary IFS; that is, to the case that we have a family of operators $\{S_i\}_{i=0}^{N-1}$ that satisfy part (1) of Theorem \ref{S}. Toward this end, let $S(X)$ be the collection of all positive operator-valued measures from $\mathscr{B}(X)$ into the positive operators on $L^2(X, \mu)$. We will show that $(S(X), \rho)$ is a complete metric space, thereby generalizing Theorem \ref{complete} to positive operator-valued measures. Additionally, we will show that the map $U$ extends to a map on $S(X)$, and is a Lipschitz contraction in the $\rho$ metric. As a consequence, there will exist a unique POVM $A \in S(X)$ satisfying

\begin{equation} \label{uniquepovm} A(\cdot) = \sum_{i=0}^{N-1} S_i A(\sigma_i^{-1}(\cdot)) S_i^*. \end{equation}

\noindent This will generalize part (1) of Theorem \ref{uniquepvm} to the case of an arbitrary IFS. We would like to note that Jorgensen proved the existence and uniqueness of a POVM that satisfies equation (\ref{uniquepovm}) in a special case, using Kolmogorov's extension theorem. We refer the reader to Lemma I.3 in \cite{Jorgensen4}. 

A family of operators $\{S_i\}_{i=0}^{N-1}$ defined on a Hilbert space satisfying equation (\ref{ident}) is one of  the starting points for our results below. To provide broader context, it is worthwhile to note that equation (\ref{ident}) also appears in quantum information theory. Indeed, a measurement of a quantum system on a Hilbert space $\mathcal{H}$ is composed of a family of such operators, as described in papers by D.W. Kribs and colleagues (see \cite{Kribs} and \cite{Kribs2}). The number of operators in the family corresponds to the number of measurement outcomes of an experiment. If the state of the system before the experiment is the unit vector $h \in \mathcal{H}$, then the probability that measurement outcome $i$ occurs is given by $p_h(i) = \langle S_iS_i^* h, h \rangle$. Using equation (\ref{ident}), we obtain $\sum_{i=0}^{N-1} p_h(i) = 1$, which implies that $p_h(\cdot)$ is a probability measure on the measurement outcomes. This connection to quantum information theory was made aware to the author by Jorgensen and colleagues in their paper \cite{Kornelson}. 

To conclude the introductory section, we mention an important result in operator theory, which is Naimark's dilation theorem. 

\begin{thm} \text{} [Naimark's Dilation Theorem] Let $F$ be a POVM with respect to the pair $(X, \mathcal{H})$. There exists a Hilbert space $\mathcal{K}$, a bounded operator $V: \mathcal{K} \rightarrow \mathcal{H},$ and a projection-valued measure $P$ with respect to the pair $(X, \mathcal{K}),$ such that

$$F(\cdot) = VP(\cdot)V^*.$$

\end{thm}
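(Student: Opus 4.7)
I plan to give a Stinespring/GNS-type construction. Let $\mathcal{A}$ denote the $*$-algebra of simple functions on $(X,\mathscr{B}(X))$, and consider the algebraic tensor product $\mathcal{A}\otimes\mathcal{H}$. On this space I define a sesquilinear form by
$$\left\langle \sum_i \mathbf{1}_{E_i}\otimes h_i,\ \sum_j \mathbf{1}_{E_j'}\otimes k_j \right\rangle_F = \sum_{i,j}\langle F(E_i\cap E_j')h_i, k_j\rangle_{\mathcal{H}}.$$
The first key step will be to show this form is positive semi-definite. Given a representative $\xi=\sum_{i=1}^n \mathbf{1}_{E_i}\otimes h_i$, I would refine to the finite collection of atoms in the Boolean algebra generated by $E_1,\dots,E_n$ to rewrite $\xi$ as $\sum_k \mathbf{1}_{B_k}\otimes h_k'$ where the $B_k$ are pairwise disjoint; then $\langle \xi,\xi\rangle_F = \sum_k \langle F(B_k)h_k', h_k'\rangle \geq 0$ because each $F(B_k)$ is a positive operator. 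Quotienting by the null space $N=\{\xi:\langle\xi,\xi\rangle_F=0\}$ and completing gives the dilation Hilbert space $\mathcal{K}$.

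Next I would construct the relevant maps. Define $W:\mathcal{H}\to\mathcal{K}$ by $Wh=[\mathbf{1}_X\otimes h]$; since $F(X)=\text{id}_{\mathcal{H}}$, a direct computation shows $\|Wh\|_{\mathcal{K}}^2 = \langle F(X)h,h\rangle = \|h\|^2$, so $W$ is an isometry, hence bounded, and I set $V=W^*:\mathcal{K}\to\mathcal{H}$. For $\Delta\in\mathscr{B}(X)$, I define $P(\Delta)$ on representatives by $P(\Delta)(\mathbf{1}_E\otimes h)=\mathbf{1}_{E\cap\Delta}\otimes h$ and extend linearly. Routine checks using the bilinearity of the form show $P(\Delta)$ descends to the quotient, is bounded (in fact contractive, because the form on $P(\Delta)\xi$ agrees with replacing each $E_i\cap E_j'$ by $E_i\cap E_j'\cap\Delta$), and extends to $\mathcal{K}$. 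That $P(\Delta)^2=P(\Delta)$ and $P(\Delta)^*=P(\Delta)$ follow from $\mathbf{1}_\Delta^2=\mathbf{1}_\Delta$ and the symmetry of the form under swapping $E_i,E_j'$ together with complex conjugation; the multiplicativity $P(\Delta_1)P(\Delta_2)=P(\Delta_1\cap\Delta_2)$ is immediate, giving the first three axioms of a PVM.

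The last verification is the dilation identity and countable additivity. For the identity, I unwind: for any $h,h'\in\mathcal{H}$, $\langle V P(\Delta)Wh, h'\rangle_{\mathcal{H}} = \langle P(\Delta)[\mathbf{1}_X\otimes h], [\mathbf{1}_X\otimes h']\rangle_{\mathcal{K}} = \langle[\mathbf{1}_\Delta\otimes h], [\mathbf{1}_X\otimes h']\rangle_{\mathcal{K}}= \langle F(\Delta)h, h'\rangle_{\mathcal{H}}$, whence $VP(\Delta)V^*=F(\Delta)$. For countable additivity, if $\{\Delta_n\}$ is a pairwise disjoint family with union $\Delta$, then for any $\xi,\eta$ in the dense subspace of simple tensor combinations the scalar identity $\langle P(\Delta)\xi,\eta\rangle=\sum_n\langle P(\Delta_n)\xi,\eta\rangle$ reduces via the definition of the form to countable additivity of $\langle F(\cdot)h_i, k_j\rangle$, which is built into the POVM axioms; the weak-operator convergence then extends to all of $\mathcal{K}$ by uniform boundedness of the partial sums (they are orthogonal projections with ranges summing inside a fixed projection). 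The main obstacle in the whole argument is the positivity of the form, which I isolated above via the disjoint-refinement trick; once that is in hand the rest of the construction is formal manipulation in the quotient space.
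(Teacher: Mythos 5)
Your construction is correct: this is the standard GNS/Stinespring-style proof of Naimark's theorem, and the key step you isolated (positivity of the form via refinement into the disjoint atoms of the Boolean algebra generated by $E_1,\dots,E_n$, so that $\langle\xi,\xi\rangle_F=\sum_k\langle F(B_k)h_k',h_k'\rangle\geq 0$) is exactly the right place to put the weight. The remaining verifications you sketch all go through: contractivity of $P(\Delta)$ follows from $F(B_k\cap\Delta)\leq F(B_k)$, the dilation identity is the one-line computation you give, and weak countable additivity extends from the dense subspace by the uniform bound on the partial-sum projections. The paper itself offers no proof of this statement --- it is quoted as a classical result of operator theory --- so there is no argument of the author's to compare yours against. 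It is worth noting, though, that where the paper actually needs a dilation (for the fixed POVM $A$ of Corollary \ref{FixedPOVM}), it does not invoke the abstract theorem but instead exhibits a concrete dilation space, namely $L^2(\Omega,P)$ over the symbol space $\Omega=\prod_1^\infty\Gamma_N$, with $V f=f\circ\pi$ the isometry induced by the coding map and $P=E(\pi^{-1}(\cdot))$ the canonical projection-valued measure there. Your abstract construction proves existence in general; the paper's explicit construction identifies the dilation space and the intertwining operators concretely for the IFS setting, which is the additional information the application requires.
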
 

\noindent We call $P$ a dilation of the POVM $F$. In the results section, we will build off an existing result in \cite{Kornelson} to identify an explicit Hilbert space that supports such a dilation of the POVM $A$ in equation (\ref{uniquepovm}).

\section{Results:}

\subsection{Metric Space Completion of $M_{\text{loc}}(Y)$:}

Let $(Y,d)$ be a complete and separable metric space. As mentioned earlier, it was first claimed in \cite{Hutchinson}  that $(M_{\text{loc}}(Y),H)$ is a complete metric space.  However, we will briefly outline an example, presented in \cite{Kravchenko}, which shows this not to be true.   

\begin{cla}\cite{Kravchenko}[Kravchenko] Let $(Y,d)$ be an unbounded metric space.  Then \\ $(M_{\text{loc}}(Y),H)$ is not complete.  \end{cla}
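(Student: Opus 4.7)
The plan is to exhibit a Cauchy sequence in $(M_{\text{loc}}(Y), H)$ whose limit (in the larger space $(M(Y), H)$, where the ambient sequence actually converges) has unbounded support and therefore lies outside $M_{\text{loc}}(Y)$. Since $M_{\text{loc}}(Y) \subseteq M(Y)$ isometrically, uniqueness of limits in the metric space $(M(Y), H)$ then rules out any $M_{\text{loc}}(Y)$-limit.

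Concretely, fix a base point $y_0 \in Y$. Because $(Y,d)$ is unbounded, I can select a sequence $\{y_k\}_{k=1}^{\infty}$ in $Y$ with $d(y_0,y_k) \to \infty$. Choose positive weights $\{c_k\}_{k=0}^{\infty}$ with $\sum_{k=0}^{\infty} c_k = 1$ and with $\sum_{k=0}^{\infty} c_k\, d(y_0,y_k) < \infty$; this is always possible by letting $c_k$ decay to zero faster than $1/d(y_0,y_k)$ (after choosing $y_k$ so the distances do not blow up faster than a prescribed rate, or after shrinking $c_k$ accordingly). Then define the truncated-and-redistributed measures
\[
\mu_n \;=\; \sum_{k=0}^{n-1} c_k\, \delta_{y_k} \;+\; r_n\, \delta_{y_0}, \qquad r_n := \sum_{k=n}^{\infty} c_k,
\]
each of which is a Borel probability measure with finite (hence bounded) support, so $\mu_n \in M_{\text{loc}}(Y)$.

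To verify the Cauchy condition, I would take $m < n$ and an arbitrary $f \in \text{Lip}_1(Y)$, then note that
\[
\int_Y f\, d\mu_n - \int_Y f\, d\mu_m \;=\; \sum_{k=m}^{n-1} c_k\bigl(f(y_k) - f(y_0)\bigr),
\]
so by the Lipschitz estimate $|f(y_k)-f(y_0)| \leq d(y_0,y_k)$, I get $H(\mu_n,\mu_m) \leq \sum_{k=m}^{n-1} c_k\, d(y_0,y_k)$. Since this is a tail of a convergent series, $(\mu_n)$ is Cauchy in $H$.

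The candidate limit is $\mu := \sum_{k=0}^{\infty} c_k\, \delta_{y_k}$, which belongs to $M(Y)$ by the summability condition on $c_k d(y_0,y_k)$. By the same Lipschitz estimate, $H(\mu_n,\mu) \to 0$, so $\mu_n \to \mu$ in $(M(Y), H)$. The support of $\mu$ is $\{y_0\} \cup \{y_k : k \geq 1\}$, an unbounded set since $d(y_0,y_k) \to \infty$; hence $\mu \notin M_{\text{loc}}(Y)$. If some $\nu \in M_{\text{loc}}(Y)$ were an $H$-limit of $(\mu_n)$, then $\nu$ and $\mu$ would be two limits of the same Cauchy sequence in the metric space $(M(Y),H)$ (by Kravchenko's theorem), forcing $\nu = \mu$, a contradiction. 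Therefore $(M_{\text{loc}}(Y), H)$ is not complete.

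The only delicate point is the joint selection of $\{y_k\}$ and $\{c_k\}$ so that the weighted distance series converges; everything else is a routine Lipschitz estimate together with uniqueness of limits. This step is mild because one has complete freedom to shrink the $c_k$ after the $y_k$ are fixed.
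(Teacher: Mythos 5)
Your proof is correct and follows essentially the same route as the paper, which constructs the Cauchy sequence $\nu_n = 2^{-n}\delta_{x_0} + \sum_{k=1}^{n}2^{-k}\delta_{x_k}$ with $d(x_0,x_k)\le k$ and merely asserts non-convergence in $M_{\text{loc}}(Y)$. Your version supplies the details the paper omits (the explicit Cauchy estimate, the identification of the limit in $M(Y)$, and the uniqueness-of-limits argument), and your flexibility in choosing the weights $c_k$ after the points $y_k$ is just a mild generalization of the paper's fixed choice $c_k = 2^{-k}$.
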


\begin{proof} Choose a sequence of points $x_k \in Y$ for $k=0,1,2,...$, such that $d(x_0,x_k) \leq k $ for all $k$, and $d(x_k,x_0) \rightarrow \infty$.  For a point $x \in Y$, define the delta measure at $x$ by 
$$
\delta_x(A) =
\begin{cases}
1 \text{ if } x \in A \\
0 \text{ if } x \notin  A.
\end{cases}
$$

\noindent For $n=1,2,3,...$, define the sequence of measures $\nu_n = 2^{-n}\delta_{x_0} + \Sigma_{k=1}^n 2^{-k}\delta_{x_k} \in M_{\text{loc}}(Y)$. This sequence is Cauchy in $(M_{\text{loc}}(Y),H)$.  However, it can be shown that it does not converge to a measure in $(M_{\text{loc}}(Y),H)$.

\end{proof} 

Since $(M_{\text{loc}}(Y),H)$ is not a complete metric space  (when $Y$ is unbounded), we consider the larger sub-collection of measures, $M(Y)$, equipped with the $H$ metric.  Indeed, we will review that $M_{\text{loc}}(Y) \subseteq M(Y)$. 

\begin{defi} A measure $\mu$ on the metric space $Y$ is said to be regular if for every Borel subset $A \subseteq Y$, and every $\epsilon > 0$, there exists a closed set $F$ and an open set $G$ such that $F \subseteq A \subseteq G$ and $\mu(G \setminus F) < \epsilon$.  
\end{defi}

\begin{defi} A measure $\mu$ on the metric space $Y$ is said to be tight if for every $\epsilon > 0$, there exists a compact set $K$ such that $\mu(Y \setminus K) < \epsilon$.  
\end{defi} 

\begin{rek} \label{regular} Since $Y$ is a complete and separable metric space, every Borel probability measure on $Y$ is regular and tight (see Ch. 1, Section 1 in \cite{Billingsley}).  In particular, the measures in $M(Y)$ and  $M_{\text{loc}}(Y)$ are all regular and tight. 
\end{rek}

\begin{lem} {\label{supcompact}} \cite{Billingsley}[Ch. 1, Section 1 in Billingsley] A Borel probability measure $\mu$ is tight on the metric space $Y$ if and only if for each Borel subset $A \subseteq Y$, $\mu(A) = \sup\{\mu(K): K \subseteq A \text{ and } K \text{ compact }\}$.
\end{lem}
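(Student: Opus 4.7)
The plan is to prove the equivalence directly, with one direction being essentially a tautology after specialization and the other relying on regularity of $\mu$ to intersect an inner closed approximation of $A$ with a tight compact exhaustion of $Y$.

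For the reverse direction, assume that every Borel set $A$ satisfies $\mu(A) = \sup\{\mu(K) : K \subseteq A, \, K \text{ compact}\}$. Apply this with $A = Y$: for every $\epsilon > 0$ there exists a compact $K \subseteq Y$ with $\mu(K) > 1 - \epsilon$, i.e.\ $\mu(Y \setminus K) < \epsilon$, which is exactly the definition of tightness.

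For the forward direction, assume $\mu$ is tight, fix a Borel set $A \subseteq Y$, and fix $\epsilon > 0$. The key ingredient is the fact that every finite Borel measure on a metric space is regular: there exists a closed set $F \subseteq A$ with $\mu(A \setminus F) < \epsilon/2$. By tightness, choose a compact set $K_0 \subseteq Y$ with $\mu(Y \setminus K_0) < \epsilon/2$. Put $K := F \cap K_0$. Then $K$ is a closed subset of a compact set, hence compact, and $K \subseteq F \subseteq A$. By subadditivity,
$$\mu(A \setminus K) \leq \mu(A \setminus F) + \mu(A \setminus K_0) \leq \mu(A \setminus F) + \mu(Y \setminus K_0) < \epsilon,$$
so $\mu(K) > \mu(A) - \epsilon$. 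Since $\epsilon$ was arbitrary, $\mu(A) = \sup\{\mu(K) : K \subseteq A, \, K \text{ compact}\}$.

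The main obstacle — really the only non-bookkeeping step — is invoking regularity of $\mu$ in the form "every Borel set is inner-approximable by closed sets." On a general metric space this is proved by the standard monotone class / $\sigma$-algebra argument: the collection of Borel sets for which both inner-closed and outer-open approximation hold contains every closed set (approximate a closed $C$ outside by the open $\epsilon$-neighborhoods $C_\epsilon = \{y : d(y,C) < \epsilon\}$, which shrink to $C$) and is closed under complementation and countable unions, hence is all of $\mathscr{B}(Y)$. In the present paper this regularity has already been recorded in Remark~\ref{regular} for the complete separable setting of interest, so in practice the proof reduces to the one-line intersection $K = F \cap K_0$ above.
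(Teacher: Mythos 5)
Your proof is correct. The paper does not actually prove this lemma---it is stated with a citation to Billingsley and no argument is given---so there is no in-paper proof to compare against; your argument (specializing the sup condition to $A=Y$ for one direction, and for the other intersecting a closed inner approximation $F\subseteq A$ furnished by regularity with a compact set $K_0$ furnished by tightness, so that $K=F\cap K_0$ is compact and $\mu(A\setminus K)<\epsilon$) is the standard proof and is essentially the one in the cited reference. The only step you compress is the regularity lemma itself: in the $\sigma$-algebra argument one must pass to a finite subunion of the approximating closed sets when handling countable unions, since a countable union of closed sets need not be closed; this is routine, you correctly identify it as the standard argument, and in any case the paper's Remark~\ref{regular} already records the regularity you need for the setting at hand.
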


\begin{cor} \label{supportzero} If $\mu$ is a Borel probability measure which is tight on the metric space $Y$, then $\mu(Y \setminus \text{supp}(\mu)) = 0$.  
\end{cor}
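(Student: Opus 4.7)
The plan is to combine the characterization of the support with the compact-inner-regularity provided by Lemma \ref{supcompact}. Set $A = Y \setminus \text{supp}(\mu)$. The goal is to show $\mu(A)=0$, and since $\mu$ is tight, by Lemma \ref{supcompact} it suffices to prove that $\mu(K)=0$ for every compact $K \subseteq A$.

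First I would unpack the definition of the support: $\text{supp}(\mu)$ is the set of all $x \in Y$ such that every open neighborhood of $x$ has positive $\mu$-measure. Consequently, for every $x \in A$ there exists an open neighborhood $U_x$ of $x$ with $\mu(U_x)=0$, and the set $A$ itself is open (each $U_x$ is contained in $A$, since membership in $A$ is witnessed by $U_x$ itself for every point of $U_x$).

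Next, given any compact subset $K \subseteq A$, I would extract a finite subcover $U_{x_1},\dots,U_{x_n}$ from the open cover $\{U_x\}_{x \in K}$ and apply finite subadditivity:
\[
\mu(K) \leq \mu\!\left(\bigcup_{j=1}^{n} U_{x_j}\right) \leq \sum_{j=1}^{n} \mu(U_{x_j}) = 0.
\]
Invoking Lemma \ref{supcompact}, we conclude
\[
\mu(A) = \sup\{\mu(K) : K \subseteq A,\ K \text{ compact}\} = 0,
\]
which is the desired conclusion.

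I do not anticipate a genuinely hard step. The only subtle point is recognizing that, even though $A$ may be an uncountable union of null open sets (so countable subadditivity does not immediately apply), tightness reduces the problem to compact subsets, on which the open cover becomes finite and subadditivity closes the argument. If one wanted to be extra careful, one could note that since $Y$ is separable the cover $\{U_x\}_{x \in A}$ already admits a countable subcover, yielding $\mu(A)=0$ directly by countable subadditivity; this gives an alternative route that sidesteps tightness, but the tightness-based argument is the one that fits cleanly into the flow of the paper.
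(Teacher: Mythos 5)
Your proof is correct and follows essentially the same route as the paper: both reduce to compact $K \subseteq Y \setminus \text{supp}(\mu)$ via Lemma \ref{supcompact}, extract a finite subcover by $\mu$-null open sets, and conclude $\mu(K)=0$ by finite subadditivity. The only difference is cosmetic (you spell out the open-cover bookkeeping and mention the Lindel\"of alternative, which the paper does not).
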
 

\begin{proof} Note that $Y \setminus \text{supp}(\mu) = \cup\{A \subseteq Y: A \text{ is open and } \mu(A)=0\}$ which is a Borel set in $Y$.  Therefore by Lemma \ref{supcompact}, $\mu(Y \setminus \text{supp}(\mu)) = \sup\{\mu(K): K \subseteq Y \setminus \text{supp}(\mu) \text{ and } K \text{ compact }\}$.  Now if $K \subseteq Y \setminus \text{supp}(\mu)$, then since $K$ is compact, it has a finite subcovering by $\mu$-measure zero open sets.  Hence, $\mu(K) = 0$, and therefore $\mu(Y \setminus \text{supp}(\mu)) = 0$.
\end{proof} 

\begin{prop} $M_{\text{loc}}(Y) \subseteq M(Y)$.  

\end{prop}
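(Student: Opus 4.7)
The plan is to show that any measure with bounded support automatically integrates every Lipschitz function, using the standard linear-growth bound for Lipschitz functions together with Corollary \ref{supportzero} to restrict integration to the (bounded) support.

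First, I fix an arbitrary $\mu \in M_{\text{loc}}(Y)$ and an arbitrary $f \in \text{Lip}(Y)$ with Lipschitz constant $L \geq 0$. Since $\mu$ has bounded support by definition of $M_{\text{loc}}(Y)$, I can choose a point $x_0 \in Y$ and a radius $R > 0$ such that $\text{supp}(\mu) \subseteq \{x \in Y : d(x,x_0) \leq R\}$. The Lipschitz condition then yields, for every $x \in \text{supp}(\mu)$, the pointwise bound
$$|f(x)| \leq |f(x_0)| + L\, d(x,x_0) \leq |f(x_0)| + LR.$$

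Next, I invoke Remark \ref{regular}, which tells us that $\mu$ is tight since $Y$ is complete and separable, and then apply Corollary \ref{supportzero} to conclude that $\mu(Y \setminus \text{supp}(\mu)) = 0$. This allows me to replace the domain of integration by $\text{supp}(\mu)$ without changing the value of the integral. Combining this with the pointwise bound above and the fact that $\mu$ is a probability measure gives
$$\int_Y |f|\, d\mu = \int_{\text{supp}(\mu)} |f|\, d\mu \leq (|f(x_0)| + LR)\, \mu(\text{supp}(\mu)) = |f(x_0)| + LR < \infty.$$
Since $f \in \text{Lip}(Y)$ was arbitrary, this shows $\mu \in M(Y)$, and hence $M_{\text{loc}}(Y) \subseteq M(Y)$.

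There is no real obstacle here: the content of the proposition is essentially just that Lipschitz functions grow at most linearly with distance, and the bounded support of $\mu$ together with Corollary \ref{supportzero} ensures the relevant distances are uniformly bounded $\mu$-almost everywhere. The only subtlety worth noting is that one does need Corollary \ref{supportzero} rather than merely the boundedness of $\text{supp}(\mu)$, because a priori a Lipschitz function could be very large far from the support; the tightness of Borel probability measures on a complete separable metric space is exactly what rules this out.
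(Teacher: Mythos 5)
Your proof is correct and follows essentially the same route as the paper's: both use Corollary \ref{supportzero} to discard the complement of the (bounded) support and then apply the linear-growth bound $|f(x)| \leq |f(x_0)| + L\,d(x,x_0)$ on the remaining bounded set. The only cosmetic difference is that you integrate over $\text{supp}(\mu)$ directly while the paper integrates over a closed ball containing it.
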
 

\begin{proof} Let $\mu \in M_{\text{loc}}(Y)$.  To show that $\mu \in M(Y)$, we need to show that $\int_Y |f| d\mu < \infty$ for all $f \in \text{Lip}(Y)$.  Choose $f \in \text{Lip}(Y)$ with Lipschitz constant $\gamma$, and choose a point $x_0 \in Y$.  Since $\mu$ has bounded support, we can assume that there exists a $K \geq 0$ such that $\text{supp}(\mu) \subseteq B_K(x_0)$, where $B_K(x_0) = \{x \in Y : d(x,x_0) \leq K\}$.  Moreover, $\mu(Y \setminus B_K(x_0)) = 0$ by Corollary \ref{supportzero}.  This implies that
\begin{eqnarray*} 
\int_Y |f| d\mu & = & \int_{B_K(x_0)} |f| d\mu + \int_{Y \setminus B_K(x_0)} |f| d\mu \nonumber \\
		  & = & \int_{B_K(x_0)} |f| d\mu. \nonumber \\ \end{eqnarray*}
		  
\noindent Continuing, observe that
\begin{eqnarray*}  
\int_{B_K(x_0)} |f(x)| d\mu(x) & \leq &  \int_{B_K(x_0)} |f(x) - f(x_0)| d\mu(x) + \int_{B_K(x_0)} |f(x_0)| d\mu(x) \nonumber \\
           & \leq & \int_{B_K(x_0)} \gamma d(x,x_0) d\mu(x) + \int_{B_K(x_0)} |f(x_0)| d\mu(x) \nonumber \\
           & \leq &  \gamma K \mu(B_K(x_0)) + |f(x_0)| \mu(B_K(x_0)) \nonumber \\
           & < &  \infty.  \nonumber 
					 \end{eqnarray*} 
					 
\noindent This shows that $M_{\text{loc}}(Y) \subseteq M(Y)$.  

\end{proof}

As mentioned in the introduction, it was proved by Kravchenko that $(M(Y),H)$ is a complete metric space (see \cite{Kravchenko}). It is worth noting that C. Akerlund-Bistrom proved the special case that if $Y = \mathbb{R}^n$, then $(M(Y),H)$ is a complete metric space (see \cite{Bistrom}). During a seminar talk that the author presented at the University of Colorado, A. Gorokhovsky posed the question: is $(M(Y),H)$ the metric space completion of $(M_{\text{loc}}(Y),H)$? This question is answered in the affirmative below. 

\begin{thm} \cite{Davison3} \label{completion}[Davison] $(M(Y), H)$ is the completion of the metric space \\ $(M_{\text{loc}}(Y),H)$. \end{thm}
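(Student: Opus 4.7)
The plan is to reduce the theorem to a density statement. Since Kravchenko's theorem tells us that $(M(Y),H)$ is complete and we have already verified that $M_{\text{loc}}(Y) \subseteq M(Y)$, all that remains is to show that $M_{\text{loc}}(Y)$ is dense in $(M(Y),H)$. Once density is established, the uniqueness-of-completion statement for metric spaces (any complete space containing a given dense isometric copy of a metric space is the completion) finishes the theorem.

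To prove density, I would fix an arbitrary $\mu \in M(Y)$ and a reference point $x_0 \in Y$, and construct an explicit approximating sequence in $M_{\text{loc}}(Y)$ by truncating $\mu$ to a ball $B_n(x_0) = \{x \in Y : d(x,x_0) \leq n\}$ and redistributing the lost mass onto a point mass at $x_0$. Concretely, I would define
\[ \mu_n := \mu|_{B_n(x_0)} + \mu\bigl(Y \setminus B_n(x_0)\bigr)\, \delta_{x_0}. \]
Each $\mu_n$ is a Borel probability measure whose support is contained in $B_n(x_0)$, so $\mu_n \in M_{\text{loc}}(Y)$.

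The main computation is then the Kantorovich estimate. For any $f \in \text{Lip}_1(Y)$, the part of the integral over $B_n(x_0)$ cancels, leaving
\[ \int_Y f\, d\mu - \int_Y f\, d\mu_n = \int_{Y \setminus B_n(x_0)} \bigl(f(x) - f(x_0)\bigr)\, d\mu(x). \]
Applying $|f(x) - f(x_0)| \leq d(x,x_0)$ and taking absolute values yields
\[ \left| \int_Y f\, d\mu - \int_Y f\, d\mu_n \right| \leq \int_{Y \setminus B_n(x_0)} d(x,x_0)\, d\mu(x), \]
which is independent of $f$. Hence $H(\mu,\mu_n)$ is bounded by the same tail integral.

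The convergence of this tail to zero is the one nontrivial ingredient, and it hinges on the definition of $M(Y)$. The function $x \mapsto d(x,x_0)$ lies in $\text{Lip}_1(Y) \subseteq \text{Lip}(Y)$, so by definition of $M(Y)$ we have $\int_Y d(x,x_0)\, d\mu(x) < \infty$. Since the integrand is nonnegative and the sets $Y \setminus B_n(x_0)$ decrease to the empty set (or more precisely, $\mathbf{1}_{Y \setminus B_n(x_0)} \to 0$ pointwise), the dominated convergence theorem forces the tail integral to $0$ as $n \to \infty$. I expect the only subtle point to be making sure the truncation uses a redistribution step (rather than a simple renormalization $\mu|_{B_n(x_0)}/\mu(B_n(x_0))$), because the redistribution onto $\delta_{x_0}$ produces the clean cancellation above and avoids a $1/\mu(B_n(x_0))$ factor that would otherwise complicate the estimate when $\mu(B_n(x_0))$ is small for small $n$. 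Together, density and Kravchenko's completeness theorem identify $(M(Y),H)$ as the metric space completion of $(M_{\text{loc}}(Y),H)$.
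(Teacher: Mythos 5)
Your proposal is correct, and it takes a genuinely different route to density than the paper does. The paper uses tightness to extract an increasing sequence of compact sets $K_n$ with $\mu(K_n) \to 1$ and approximates $\mu$ by the \emph{renormalized} restrictions $\mu_n(\Delta) = \mu(\Delta \cap K_n)/\mu(K_n)$; to control $H(\mu_n,\mu)$ it then invokes Akerlund-Bistrom's reformulation of the Kantorovich metric as a supremum over $\mathrm{Lip}_1$ functions vanishing at a base point $x_0$, and ends up with a two-term estimate (one term carrying the factor $\frac{1-\mu(K_n)}{\mu(K_n)}$, the other the tail integral), each handled by dominated convergence. Your truncate-to-a-ball-and-redistribute construction $\mu_n = \mu|_{B_n(x_0)} + \mu(Y \setminus B_n(x_0))\,\delta_{x_0}$ is cleaner on two counts: the mass correction via $\delta_{x_0}$ makes the integrals over $B_n(x_0)$ cancel exactly for \emph{every} $f \in \mathrm{Lip}_1(Y)$ (not just those vanishing at $x_0$), so you need neither the Bistrom reformulation nor any tightness/regularity input, and you get the single uniform bound $H(\mu,\mu_n) \leq \int_{Y \setminus B_n(x_0)} d(x,x_0)\,d\mu$, which tends to $0$ precisely because $d(\cdot,x_0) \in \mathrm{Lip}(Y)$ is $\mu$-integrable by the definition of $M(Y)$. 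You also correctly flag that a bare renormalization would introduce a $1/\mu(B_n(x_0))$ factor; the paper does incur exactly that factor (and implicitly needs $\mu(K_n)>0$, which holds only for large $n$). What the paper's version buys in exchange is approximants supported on compact (not merely bounded) sets and a construction that it reuses verbatim for the $MH$-metric analogue (Theorem \ref{completion2}), where the bound $\|f\|_\infty \leq 1$ replaces the distance estimate; your sequence would work there too with the same cancellation. Your framing of the final step---completeness of $(M(Y),H)$ from Kravchenko plus density plus uniqueness of completions---matches what the paper leaves implicit.
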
 

\begin{proof} Suppose that $\mu$ is a Borel probability measure in $M(Y)$.  We need to find a sequence of measures $\{\mu_n\}_{n=1}^{\infty} \subseteq M_{\text{loc}}(Y)$ such that $\mu_n \rightarrow \mu$ in the $H$ metric.  We know from earlier, namely Lemma {\ref{supcompact}}, that there exists a sequence of compact subsets $\{K_n\}_{n=1}^{\infty}$ of $Y$ such that $\lim_{n \rightarrow \infty} \mu(K_n) = 1$.  We can choose this sequence of compact sets such that $K_1 \subseteq K_2 \subseteq K_3 \subseteq ...$, because the union of finitely many compact sets is compact, and because measures are monotone.  Next, choose some $x_0 \in K_1$.  Since each $K_n$ is compact, it is bounded so there exists a positive integer $k_n$ such that $K_n \subseteq B_{k_n}(x_0)$, where $B_{k_n}(x_0) = \{x \in Y : d(x,x_0) \leq k_n\}$.  For each $n= 1, ..., \infty$, define a Borel measure $\mu_n$ on $Y$ by $\displaystyle{\mu_n(\Delta) = \frac{\mu(\Delta \cap K_n)}{\mu(K_n)}}$ for all Borel subsets $\Delta \subseteq Y$.  Furthermore for $f \in \text{Lip}(Y),$ note that
$$\int_Y f d\mu_n = \frac{1}{\mu(K_n)} \int_Y {f \textbf{1}_{K_n}} d\mu.$$

\noindent We claim that each $\mu_n$ has bounded support.  Consider the open set $Y \setminus K_n$. 
$$\mu_n(Y \setminus K_n) = \frac{\mu((Y \setminus K_n)  \cap K_n)}{\mu(K_n)} = 0,$$

\noindent and hence the support of $\mu_n$ is contained within the bounded set $K_n$.  Also, observe that
$$\mu_n(Y) = \frac{\mu(Y \cap K_n)}{\mu(K_n)} = \frac{\mu(K_n)}{\mu(K_n)} = 1,$$

\noindent so that $\mu_n$ is a Borel probability measure on $Y$.  We have shown that for all $n = 1, 2, ...$, $\mu_n \in M_{\text{loc}}(Y)$.  It remains to show that $\mu_n \rightarrow \mu$ in the $H$ metric.  For this we  use the alternate formulation for the $H$ metric which is shown in \cite{Bistrom}; namely
$$H(\mu_n, \mu) = \sup_{f \in \text{Lip}_1(x_0)} \left\{ \left| \int_Y f d\mu_n - \int_Y f d\mu \right| \right\},$$ 

\noindent where $\text{Lip}_1(x_0)$ are the $\text{Lip}_1(Y)$ functions which vanish at $x_0$.  Let $\epsilon > 0$.  Choose some $f \in \text{Lip}_1(x_0)$.  Then  

\vspace{5mm}

$\displaystyle{\left| \int_Y f d\mu_n - \int_Y f d\mu \right|  =  \left| \frac{1}{\mu(K_n)} \int_Y {f\textbf{1}_{K_n}} d\mu - \int_Y {f} d\mu \right|}$ 

\vspace{5mm}

$\displaystyle{=  \frac{1}{\mu(K_n)} \left| \int_Y {f\textbf{1}_{K_n} - \mu(K_n)f} d\mu \right|}$ 

\vspace{5mm}

$\displaystyle{\leq  \frac{1}{\mu(K_n)} \left| \int_{K_n} {(f\textbf{1}_{K_n} - \mu(K_n)f)} d\mu \right|  +  \frac{1}{\mu(K_n)} \left| \int_{Y \setminus K_n} {(f\textbf{1}_{K_n} - \mu(K_n)f)} d\mu \right|}$ 

\vspace{5mm}
										
$\displaystyle{\leq \left( \frac{1-\mu(K_n)}{\mu(K_n)}  \int_{K_n} |f| d\mu \right) + \int_{Y \setminus K_n} |f| d\mu}$ 

\vspace{5mm}

$\displaystyle{\leq \left( \frac{1-\mu(K_n)}{\mu(K_n)} \int_{K_n} d(x,x_0) d\mu \right) + \int_{Y \setminus K_n} d(x,x_0) d\mu :=  I(n),}$ 

\vspace{5mm}
																				
\noindent where the last inequality is because $|f(x)| = |f(x)-f(x_0)| \leq d(x,x_0)$. \\
										
Since $\mu \in M(Y)$ and $d(x,x_0) \in \text{Lip}_1(Y) \subseteq \text{Lip}(Y)$ 
\begin{equation*} 0 \leq \int_Y {d(x,x_0)} d\mu := L < \infty. \end{equation*}  

\noindent Because $d(x,x_0)$ is a non-negative function, we note that for all $n$, $0 \leq \int_{K_n} {d(x,x_0)} d\mu \leq L < \infty$ and $0 \leq \int_{Y \setminus K_n} {d(x,x_0)} d\mu \leq L < \infty. $

Since $\lim_{n \rightarrow \infty} \mu(K_n) = \mu(Y) = 1$, and $K_1 \subseteq K_2 \subseteq ...$, observe that
$\textbf{1}_{Y \setminus K_n}d(x,x_0)$ \\ decreases pointwise to $0$ $\mu$-almost everywhere.  By the dominated convergence theorem, 
\begin{equation*} \lim_{n \rightarrow \infty} \int_{Y \setminus K_n} {d(x,x_0)} d\mu = \lim_{n \rightarrow \infty} \int_{Y} {\textbf{1}_{Y \setminus K_n}d(x,x_0)} d\mu =  \int_{Y} \lim_{n \rightarrow \infty}{\textbf{1}_{Y \setminus K_n}d(x,x_0)} d\mu = 0. \end{equation*}

\noindent Also, $\displaystyle{\lim_{n \rightarrow \infty} \left( \frac{1-\mu(K_n)}{\mu(K_n)} \right) = 0.}$  Choose an $N$ such that for $n \geq N$, 
$$\left( \frac{1-\mu(K_n)}{\mu(K_n)} \right)  \leq \frac{\epsilon}{2L},$$ 

\noindent and
$$ \int_{Y} {\textbf{1}_{Y \setminus K_n}d(x,x_0)} d\mu \leq \frac{\epsilon}{2}.$$

For $n \geq N$, $I(n) \leq \frac{\epsilon}{2L}(L) + \frac{\epsilon}{2} = \epsilon$.  Since the choice of $N$ is independent of the choice of $f \in \text{Lip}_1(x_0)$, we can conclude that $H(\mu_n, \mu) \leq I(n) \leq \epsilon$.  Therefore, we have shown that $M(Y)$ is the completion of the metric space $M_{\text{loc}}(Y)$ in the $H$ metric. 

\end{proof} 

As we remarked previously, the Kantorovich metric is not well defined (finite) on all Borel probability measures on $Y$ when $Y$ is unbounded. In the discussion above, this issue is overcome by restricting the Kantorovich metric to a sub-collection of measures. Another option is to consider a modified Kantorovich metric, $MH$, on $Q(Y)$ defined as follows:  For $\mu, \nu \in Q(Y),$

\begin{equation} MH(\mu,\nu) = \sup \left\{ \left| \int_Y{f}d\mu - \int_Y{f}d\nu \right| : f  \in \text{Lip}_1(Y) \text{ and } ||f||_{\infty} \leq 1 \right\}. \end{equation} 

\noindent The condition $||f||_\infty \leq 1$ guarantees that $MH$ will be finite on $Q(Y)$.  Also, observe that we have the containments: 

$$M_{\text{loc}}(Y) \subseteq M(Y) \subseteq Q(Y).$$ 




We can equip $Q(Y)$ with the weak topology.  Indeed, a net of measures $\{\mu_{\lambda}\}_{\lambda \in \Lambda} \subseteq Q(Y)$ converges weakly to a measure $\mu \in Q(Y)$, if for all $f \in C_b(Y)$, $\int_Y f d\mu_{\lambda} \rightarrow \int_Y f d\mu$, where $C_b(Y)$ is the set of all bounded continuous real-valued functions on $Y$.   The following result can be found in Section 8.3 of \cite{Bogachev}.   

\begin{thm} \cite{Bogachev}[Section 8.3 in Bogachev] The weak topology on $Q(Y)$ coincides with the topology induced by the $MH$ metric on $Y$. 
\end{thm}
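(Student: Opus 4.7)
The plan is to establish that the two topologies agree by showing that, for any sequence $\{\mu_n\} \subseteq Q(Y)$, one has $MH(\mu_n,\mu) \to 0$ if and only if $\int_Y f\,d\mu_n \to \int_Y f\,d\mu$ for every $f \in C_b(Y)$. Since $(Y,d)$ is separable, the weak topology on $Q(Y)$ is itself metrizable, so a sequential characterization is sufficient, and I can compare the two metrics by checking which sequences converge in each.

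For the direction that $MH$-convergence implies weak convergence, first observe that the class $\mathcal{F} = \{f \in \text{Lip}_1(Y) : \|f\|_\infty \leq 1\}$ controls arbitrary bounded Lipschitz functions up to rescaling: if $g$ is bounded Lipschitz with $c := \max(\|g\|_\infty, \text{Lip}(g))$, then $g/c \in \mathcal{F}$, so $MH$-convergence gives $\int g\,d\mu_n \to \int g\,d\mu$. To pass from bounded Lipschitz to general $f \in C_b(Y)$, invoke tightness (Remark \ref{regular}): given $\epsilon > 0$, choose compact $K \subseteq Y$ with $\mu(K^c) < \epsilon$, approximate $f|_K$ uniformly by a Lipschitz function, and extend to all of $Y$ by the McShane extension, obtaining a bounded Lipschitz $g$ on $Y$ with $\|f - g\|_\infty \leq 2\|f\|_\infty$ globally and $|f-g| \leq \epsilon$ on $K$. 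The control of $\mu_n(K^c)$ needed to absorb the error for $\mu_n$ is obtained by testing the sequence against a single Lipschitz cutoff function $\psi(x) = \min(1, d(x,K)/\delta) \cdot \delta$, which (for $\delta \leq 1$) belongs to $\mathcal{F}$ and majorizes the indicator of the complement of a neighborhood of $K$.

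The harder direction is that weak convergence implies $MH$-convergence, since one must upgrade the pointwise convergence $\nu \mapsto \int f\,d\nu$ (over each fixed $f \in \mathcal{F}$) to uniform convergence over the entire class $\mathcal{F}$. The plan is to exploit that $\mathcal{F}$ is uniformly bounded and uniformly equicontinuous with Lipschitz constant $1$. Given $\epsilon > 0$, use tightness of $\mu$ to select a compact $K \subseteq Y$ with $\mu(K^c) < \epsilon$; the cutoff test function of the previous paragraph, combined with the assumed weak convergence, shows that $\mu_n(K^c) < 2\epsilon$ eventually. By the Arzelà--Ascoli theorem, the restriction $\mathcal{F}|_K \subseteq C(K)$ is relatively compact in the sup norm, so pick an $\epsilon$-net $f_1, \dots, f_m \in \mathcal{F}$ for it. For each $f_i$, weak convergence yields $|\int f_i\,d\mu_n - \int f_i\,d\mu| < \epsilon$ for $n$ past some $N_i$; take $N = \max_i N_i$. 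Then for any $f \in \mathcal{F}$, split $\int f\,d(\mu_n - \mu)$ into an integral over $K$ (approximate $f|_K$ by the nearest $f_i$ within $\epsilon$ in sup norm) and over $K^c$ (controlled by $\|f\|_\infty \leq 1$ and the bounds on $\mu(K^c), \mu_n(K^c)$). This yields $MH(\mu_n,\mu) \leq C\epsilon$ for all $n \geq N$.

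The main obstacle is the uniformity in the weak-to-$MH$ implication, and in particular the need for a simultaneous tightness control of the sequence $\{\mu_n\}$ outside a single compact set. This is precisely what the cutoff test function supplies: because the function lies in the $MH$-test class (after rescaling), weak convergence against it transfers tightness from $\mu$ to the tail of the sequence $\{\mu_n\}$ in a quantitative way, which is exactly what is needed to close the Arzelà--Ascoli argument.
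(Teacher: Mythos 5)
The paper does not prove this statement; it is imported verbatim from Bogachev (Section 8.3), so your argument has to stand on its own. Your hard direction (weak convergence implies $MH$-convergence) is the standard Arzel\`a--Ascoli proof and is essentially sound, but one step must be made explicit: your cutoff function only bounds $\mu_n$ of the complement of the open $\delta$-neighborhood $K^{\delta}$ of $K$, not $\mu_n(K^{c})$. On the annulus $K^{\delta}\setminus K$ you must propagate the net estimate $\sup_{K}|f-f_i|\leq\epsilon$ outward using the fact that $f$ and $f_i$ are $1$-Lipschitz: for $x\in K^{\delta}$ choose $y\in K$ with $d(x,y)\leq\delta$ to get $|f(x)-f_i(x)|\leq\epsilon+2\delta$, and take $\delta=\epsilon$. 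With that insertion the uniformity over $\mathcal{F}$ goes through.

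The genuine gap is in the other direction. There you approximate $f\in C_b(Y)$ by a Lipschitz $g$ with $|f-g|\leq\epsilon$ only on the compact set $K$, while the test function $\psi$ controls only $\mu_n(Y\setminus K^{\delta})$. On the annulus $K^{\delta}\setminus K$ you have neither smallness of $|f-g|$ --- the Lipschitz propagation used above is unavailable because $f$ is merely continuous and may oscillate by order $\|f\|_\infty$ within distance $\delta$ of $K$ --- nor smallness of $\mu_n$, so the term $\int_{K^{\delta}\setminus K}|f-g|\,d\mu_n$ is uncontrolled and the argument does not close. Two standard repairs: (i) abandon the approximation scheme and verify the portmanteau criterion instead --- for closed $F$ the functions $g_k=\max(0,1-k\,d(\cdot,F))$ are bounded Lipschitz and decrease pointwise to $\mathbf{1}_F$, so $MH$-convergence gives $\limsup_n\mu_n(F)\leq\int g_k\,d\mu\downarrow\mu(F)$, which needs no tightness at all; or (ii) first upgrade to uniform tightness of $\{\mu_n\}$ by running your cutoff argument at scales $2^{-j}$ against compacts $K_j$ with $\mu(K_j^{c})<\epsilon 2^{-j}$ and using completeness of $Y$ to see that $\bigcap_j\overline{K_j^{2^{-j}}}$ is compact; a single compact set then serves all $\mu_n$ at once and your decomposition works. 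Finally, your reduction to sequences tacitly assumes the weak topology on $Q(Y)$ is metrizable (the paper defines weak convergence via nets); this is true for separable $Y$ but must be quoted independently, since it cannot be derived from the very equivalence you are proving.
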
 

We now state a result recently proved by Kravchenko in \cite{Kravchenko} (which was crucial for showing that $(M(Y), H)$ is complete). First, we put $\text{Lip}_b(Y)$ to be the collection of real-valued bounded Lipschitz functions on $Y.$

\begin{prop} \cite{Kravchenko} \label{keylemma}[Kravchenko] Let $\{\mu_n\}_{n=1}^{\infty}$ be a sequence of Borel measures on the complete and separable metric space $Y$ such that $\mu_n(Y) = K < \infty$ for all $n =1,2,...$, and such that for all $f \in \text{Lip}_b(Y)$, the sequence $\{ \int_Y f d\mu_n \}_{n=1}^{\infty}$ of real numbers is Cauchy.  Then there exists a Borel measure $\mu$ on $Y$ such that $\mu(Y) = K$, and such that the sequence $\{\mu_n\}_{n=1}^{\infty}$ converges in the weak topology to $\mu$. 
\end{prop}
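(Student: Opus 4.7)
The plan is to construct the limit measure $\mu$ via Prokhorov's theorem after establishing uniform tightness of $\{\mu_n\}$, and then promote subsequential weak convergence to weak convergence of the full sequence. First I would define the candidate limit functional $L : \text{Lip}_b(Y) \to \mathbb{R}$ by
\begin{equation*}
L(f) := \lim_{n \to \infty} \int_Y f \, d\mu_n,
\end{equation*}
which is well-defined by the Cauchy hypothesis; $L$ is a positive linear functional with $L(\mathbf{1}) = K$.

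The technical heart of the argument is showing that $\{\mu_n\}$ is uniformly tight. Using separability, fix a countable dense set $\{y_k\}_{k \geq 1} \subseteq Y$ and, for integers $m, N \geq 1$, put $V_{m,N} := \bigcup_{k=1}^N B(y_k, 1/m)$. For each individual $n$, Ulam's theorem (applicable because $Y$ is Polish and $\mu_n$ is finite) yields a compact set $C_n$ with $\mu_n(C_n) > K - \epsilon/2^{m+1}$; since $C_n$ is compact it is covered by finitely many $1/m$-balls from $\{B(y_k,1/m)\}$, so there exists an index $N_n = N_n(m,\epsilon)$ with $\mu_n(V_{m, N_n}) > K - \epsilon/2^{m+1}$. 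The crux is to show $\sup_n N_n(m,\epsilon) < \infty$, and this is precisely where the Cauchy hypothesis must be invoked. I would argue by contradiction: if $N_n$ were unbounded along a subsequence $n_j$, then each $\mu_{n_j}$ would carry a definite amount of mass in a new annular region $V_{m, N_j} \setminus V_{m, N_{j-1}}$ with $N_j \to \infty$. Building an appropriate bounded Lipschitz test function $f$ from truncated distance functions to carefully chosen subfamilies of $\{y_k\}$, one can arrange that $\int f \, d\mu_{n_j}$ oscillates by a definite amount across $j$, contradicting the Cauchy hypothesis. Granting $N(m) := \sup_n N_n(m,\epsilon) < \infty$ for each $m$, the set
\begin{equation*}
C_\epsilon := \overline{\bigcap_{m \geq 1} V_{m,N(m)}}
\end{equation*}
has a finite $1/m$-net for every $m$ and is closed in the complete metric space $Y$; hence $C_\epsilon$ is compact and $\mu_n(Y \setminus C_\epsilon) < \epsilon$ uniformly in $n$.

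With uniform tightness in hand, Prokhorov's theorem applied to the mass-bounded family $\{\mu_n\}$ produces a subsequence $\mu_{n_k}$ converging weakly to some finite Borel measure $\mu$ on $Y$; for every $f \in \text{Lip}_b(Y) \subseteq C_b(Y)$,
\begin{equation*}
\int_Y f \, d\mu = \lim_k \int_Y f \, d\mu_{n_k} = L(f),
\end{equation*}
and taking $f \equiv 1$ gives $\mu(Y) = L(\mathbf{1}) = K$. Because $\text{Lip}_b(Y)$ is a measure-determining class on a metric space (a Portmanteau-type statement), every weakly convergent subsequence of $\{\mu_n\}$ must share the same limit $\mu$, and the standard ``every subsequence has a further subsequence converging to $\mu$'' argument then upgrades this to weak convergence of the full sequence $\mu_n \to \mu$. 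The principal obstacle is the uniform tightness step: the Cauchy hypothesis is local in the test function while tightness is a uniform statement across $n$, and bridging the two requires combining individual Ulam tightness with a careful design of bounded Lipschitz test functions that witness any escape of mass.
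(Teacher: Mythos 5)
First, a point of reference: the paper itself offers no proof of this proposition --- it is quoted from Kravchenko's article and used as a black box --- so there is no in-paper argument to compare yours against line by line. Your overall strategy (define $L(f)=\lim_n\int_Y f\,d\mu_n$, establish uniform tightness, extract a weak limit via Prokhorov, identify it with $L$, and use that $\text{Lip}_b(Y)$ is measure-determining to force all subsequential weak limits to coincide and hence upgrade to convergence of the full sequence) is the standard and correct route; everything outside the tightness step is routine and you handle it correctly, including the compactness of $C_\epsilon$ as a closed, totally bounded subset of a complete space.

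The genuine gap is that the tightness step --- which you yourself flag as ``the technical heart'' and ``the principal obstacle'' --- is asserted rather than proved. The sentence ``building an appropriate bounded Lipschitz test function $f$ \dots one can arrange that $\int f\,d\mu_{n_j}$ oscillates by a definite amount'' is precisely the entire content of Kravchenko's lemma, and the naive implementation fails. If $N_{n_j}\to\infty$, then for each fixed $N$ the measure $\mu_{n_j}$ eventually carries mass at least $\epsilon/2^{m+1}$ outside $V_{m,N}$, but the regions where the various $\mu_{n_j}$ carry this escaping mass need not be uniformly separated from one another or from the previously used sets $V_{m,N_{j'}}$; a function required to equal $1$ on one such region and $0$ on the next cannot in general have a uniformly bounded Lipschitz constant, since those regions may be at distance zero. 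The known arguments (Kravchenko; Bogachev, Vol.~II, Section 8.6) avoid this by working with the monotone family $f_N(x)=\max\{0,\,1-2m\,d(x,\bigcup_{k\le N}B(y_k,1/(2m)))\}$, setting $a_N=\lim_n\int_Y f_N\,d\mu_n$, and showing $\sup_N a_N=K$ by a contradiction built from an alternating combination of the increments $f_{N_{j+1}}-f_{N_j}$, whose telescoping sum is bounded by $1$; controlling simultaneously the sup norm and the Lipschitz constant of that single test function is the delicate point. Until this construction is actually carried out, the proof is incomplete: you should either supply it in full or, as the paper does, cite Kravchenko (or Bogachev, Theorem 8.6.4) for exactly this step.
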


We can combine the above two results to gain the following. 

\begin{thm} $(Q(Y), MH)$ is a complete metric space. 

\end{thm}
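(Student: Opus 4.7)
The plan is to show that any Cauchy sequence in $(Q(Y), MH)$ converges by combining the two results just stated: Kravchenko's proposition, which produces a weak limit from a Cauchy-on-Lipschitz-integrals sequence, and Bogachev's theorem, which identifies the weak topology on $Q(Y)$ with the $MH$ topology.

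The first step is to take a Cauchy sequence $\{\mu_n\}_{n=1}^{\infty}$ in $(Q(Y), MH)$ and verify the hypotheses of Proposition \ref{keylemma}. Each $\mu_n$ is a probability measure, so $\mu_n(Y) = 1$ uniformly, which gives the mass hypothesis $K = 1$. For the Cauchy-integral condition, pick an arbitrary $f \in \text{Lip}_b(Y)$ with Lipschitz constant $L$ and sup-norm $M$. If $L = M = 0$ the statement is trivial; otherwise set $c = \max(L, M) > 0$, so that $f/c \in \text{Lip}_1(Y)$ and $\|f/c\|_{\infty} \leq 1$. By definition of the $MH$ metric,
$$\left| \int_Y f\, d\mu_n - \int_Y f\, d\mu_m \right| \leq c \cdot MH(\mu_n, \mu_m),$$
and since $MH(\mu_n, \mu_m) \to 0$, the sequence $\{\int_Y f\, d\mu_n\}_{n=1}^{\infty}$ is Cauchy in $\mathbb{R}$.

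The second step applies Proposition \ref{keylemma} directly: it produces a Borel measure $\mu$ on $Y$ with $\mu(Y) = 1$ such that $\mu_n \to \mu$ in the weak topology on $Q(Y)$. In particular $\mu \in Q(Y)$. The final step invokes Bogachev's theorem, which identifies the weak topology on $Q(Y)$ with the topology induced by $MH$; hence weak convergence $\mu_n \to \mu$ implies $MH(\mu_n, \mu) \to 0$, and the Cauchy sequence converges in $(Q(Y), MH)$. Since $MH$ is clearly a metric on $Q(Y)$ (well-definedness and finiteness are built into the definition through the $\|f\|_\infty \leq 1$ constraint), this completes the proof that $(Q(Y), MH)$ is complete.

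I do not anticipate a genuine obstacle: the argument is essentially a packaging of the two prior results. The only point requiring care is the rescaling $f \mapsto f/c$, which shows that $MH$-Cauchyness transfers to Cauchyness of $\int f d\mu_n$ for every bounded Lipschitz $f$, not merely for those satisfying the normalized conditions appearing in the supremum defining $MH$.
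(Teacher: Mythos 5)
Your proposal is correct and follows exactly the paper's argument: verify the hypotheses of Proposition \ref{keylemma} for an $MH$-Cauchy sequence, obtain a weak limit, and use Bogachev's identification of the weak topology with the $MH$ topology to conclude convergence. The only difference is that you spell out the rescaling $f \mapsto f/\max(L,M)$, a detail the paper leaves to the reader with ``one can show.''
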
 

\begin{proof} 

If $\{\mu_n\}_{n=1}^{\infty}$ is a Cauchy sequence of measures in $Q(Y)$, one can show that for all $f \in \text{Lip}_b(Y)$, the sequence $\{ \int_Y f d\mu_n \}_{n=1}^{\infty}$ of real numbers is Cauchy.  Therefore, by the above proposition there will exist a Borel probability measure $\mu \in Q(Y)$ such that $\mu_n$ converges to $\mu$ in the weak topology, or equivalently, in the $MH$ metric. 

\end{proof}

We now adapt Theorem \ref{completion} to this setting.  

\begin{thm} \label{completion2} \cite{Davison3} [Davison] The completion of the metric space $(M_{\text{loc}}(Y), MH)$ \\  is $(Q(Y), MH)$.  
\end{thm}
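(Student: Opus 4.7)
The plan is to mimic the construction in the proof of Theorem \ref{completion}, taking advantage of the fact that the extra constraint $\|f\|_\infty \leq 1$ on the test functions makes the convergence estimate substantially cleaner. Since $(Q(Y), MH)$ has already been shown to be a complete metric space, and since $M_{\text{loc}}(Y) \subseteq M(Y) \subseteq Q(Y)$, it suffices to prove that $M_{\text{loc}}(Y)$ is dense in $(Q(Y), MH)$.

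Given $\mu \in Q(Y)$, I would use tightness (Remark \ref{regular} and Lemma \ref{supcompact}) to pick a nested sequence $K_1 \subseteq K_2 \subseteq \cdots$ of compact subsets of $Y$ with $\mu(K_n) \to 1$, fix any $x_0 \in K_1$, and define $\mu_n(\Delta) = \mu(\Delta \cap K_n)/\mu(K_n)$. Exactly as in the proof of Theorem \ref{completion}, each $\mu_n$ is a Borel probability measure supported in the bounded set $K_n$, so $\mu_n \in M_{\text{loc}}(Y)$.

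The main step is to estimate $MH(\mu_n, \mu)$. For any $f \in \text{Lip}_1(Y)$ with $\|f\|_\infty \leq 1$, I would split
$$\left| \int_Y f\, d\mu_n - \int_Y f\, d\mu \right| = \frac{1}{\mu(K_n)} \left| \int_Y \bigl( f\mathbf{1}_{K_n} - \mu(K_n) f \bigr)\, d\mu \right|,$$
then apply the triangle inequality over $K_n$ and $Y \setminus K_n$. The integrand $f \mathbf{1}_{K_n} - \mu(K_n) f$ equals $(1 - \mu(K_n)) f$ on $K_n$ and $-\mu(K_n) f$ on $Y \setminus K_n$. Using $|f| \leq 1$, the whole expression is bounded by
$$\frac{1 - \mu(K_n)}{\mu(K_n)}\, \mu(K_n) + \mu(Y \setminus K_n) = 2\bigl(1 - \mu(K_n)\bigr).$$
This bound is independent of $f$, so $MH(\mu_n, \mu) \leq 2(1 - \mu(K_n)) \to 0$ as $n \to \infty$. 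This is noticeably easier than the argument for Theorem \ref{completion}, where the unbounded test functions forced an appeal to the dominated convergence theorem applied to $d(x, x_0)$; here the uniform bound $\|f\|_\infty \leq 1$ does all the work.

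I do not anticipate a serious obstacle: the only place one must be slightly careful is the initial step of choosing $K_n$'s so that $\mu(K_n) > 0$ for every $n$ (which is automatic once $\mu(K_n) \to 1$, by discarding finitely many terms), and then verifying that the estimate above is genuinely uniform over the unit ball of $\text{Lip}_1(Y) \cap \{\|\cdot\|_\infty \leq 1\}$, which is manifest from the fact that only $|f| \leq 1$ is used. Combining density with completeness of $(Q(Y), MH)$ yields the claimed identification of the completion.
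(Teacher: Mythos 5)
Your proposal is correct and follows essentially the same route as the paper: the same truncated measures $\mu_n(\Delta) = \mu(\Delta \cap K_n)/\mu(K_n)$, the same split of the integral over $K_n$ and $Y \setminus K_n$, and the same uniform bound $2(1-\mu(K_n))$ coming from $\|f\|_\infty \leq 1$. Your observation that the bounded test functions eliminate the need for dominated convergence is exactly the simplification the paper's argument exhibits.
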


\begin{proof} The proof of this theorem is similar to the earlier proof of Theorem \ref{completion}.  Suppose that $\mu \in Q(Y)$.  We need to find a sequence of measures $\{\mu_n\}_{n=1}^{\infty} \subseteq M_{\text{loc}}(Y)$ such that $\mu_n \rightarrow \mu$ in the $MH$ metric.  Define, exactly as before, a sequence of measures $\{\mu_n\}_{n=1}^{\infty} \subseteq M_{\text{loc}}(Y)$.  In particular, $\mu_n$ satisfies $\int f d\mu_n = \frac{1}{\mu(K_n)} \int_Y {f  \textbf{1}_{K_n}} d\mu$ for all $f \in \text{Lip}(Y).$  Choose $f \in \text{Lip}_1(X)$ such that $||f||_\infty \leq 1$.  Then
\begin{eqnarray*} 
 \left| \int_Y f d\mu_n - \int_Y f d\mu \right| & = &  \left| \int_Y f d\mu_n - \int_Y f d\mu \right| \\
 								& = & \left| \frac{1}{\mu(K_n)} \int_Y {f\textbf{1}_{K_n}} d\mu - \int_Y {f} d\mu \right| \nonumber \\
										& = & \frac{1}{\mu(K_n)} \left| \int_Y {f\textbf{1}_{K_n} - \mu(K_n)f} d\mu \right| \nonumber \\
										& \leq & \frac{1}{\mu(K_n)} \left| \int_{K_n} {(f\textbf{1}_{K_n} - \mu(K_n)f)} d\mu \right| \nonumber \\ & + & \frac{1}{\mu(K_n)} \left| \int_{Y \setminus K_n} {(f\textbf{1}_{K_n} - \mu(K_n)f)} d\mu \right| \nonumber \\
										& \leq & \left( \frac{1-\mu(K_n)}{\mu(K_n)}  \int_{K_n} |f| d\mu \right) + \int_{Y \setminus K_n} |f| d\mu \nonumber \\
										& \leq & \left( \frac{1-\mu(K_n)}{\mu(K_n)} \int_{K_n} 1 d\mu \right) + \int_{Y \setminus K_n} 1 d\mu \nonumber \\
										& \leq & (1-\mu(K_n)) + \mu(Y \setminus K_n) \\
										& = & 2 \mu(Y \setminus K_n). \\
										\end{eqnarray*} 
										
\noindent  The last line of the above expression is independent of the choice of $f$ and goes to zero as $n$ goes to infinity.  Hence, $\mu_n \rightarrow \mu$ in the $MH$ metric.

\end{proof}

\subsection{Generalizing the Kantorovich Metric to Positive Operator-Valued Measures}

In this sub-section, we will generalize the Kantorovich metric to the space of positive operator-valued measures on a Hilbert space, which are operator-valued measures which take values in the positive operators. The positive operators on a Hilbert space contain the projections.   

Let $(X,d)$ be a compact metric space, and let $\mathcal{H}$ be an arbitrary Hilbert space. In our application in the next sub-section, $X$ will be the attractor set associated to an IFS, and $\mathcal{H} = L^2(X, \mu),$ where $\mu$ is the Hutchinson measure. 

We begin with some preliminary definitions and facts. 

\begin{defi} A positive operator $L \in \mathcal{B}(\mathcal{H})$ satisfies $\langle Lh, h \rangle \geq 0$ for all $h \in \mathcal{H}$. 
\end{defi}

\begin{defi} A positive operator-valued measure with respect to the pair $(X,\mathcal{H})$ is a map $A: \mathscr{B}(X) \rightarrow \mathcal{B}(\mathcal{H})$ such that:

\begin{itemize} 

\item $A(\Delta)$ is a positive operator in $\mathcal{B}(\mathcal{H})$ for all $\Delta \in \mathscr{B}(X)$;

\item $A(\emptyset) = 0$ and $A(X) = \text{id}_{\mathcal{H}}$ (the identity operator on $\mathcal{H}$);

\item If $\{ \Delta_n \}_{n=1}^{\infty}$ is a sequence of pairwise disjoint sets in $\mathscr{B}(X)$, and if $g,h \in \mathcal{H}$, then
$$ \left \langle A\left( \bigcup_{n=1}^{\infty} \Delta_n \right)g , h \right \rangle = \sum_{n=1}^{\infty} \langle A(\Delta_n)g, h \rangle.$$

\end{itemize}

\end{defi}

\begin{rek} A projection-valued measure with respect to the pair $(X,\mathcal{H})$ is a positive operator-valued measure because projections are positive operators.  
\end{rek}

\begin{rek} \label{Asesqui} Let $A$ be a positive operator-valued measure with respect to the pair $(X, \mathcal{H})$.  The map $[g,h] \in \mathcal{H} \times \mathcal{H} \mapsto A_{g,h}(\cdot)$ is sesquilinear.  This follows from the fact that the inner product on $\mathcal{H}$ is sesquilinear. 
\end{rek} 

Our below discussion will rely on the following two standard theorems of functional analysis, which are stated with the amount of generality we will need.  

\begin{thm} \cite{Conway} \label{riesz1}[Theorem III.5.7 in Conway] Let $X$ be a compact metric space, and $T: C(X) \rightarrow \mathbb{C}$ be a bounded linear functional.  There exists a unique complex-valued regular Borel finite measure $\mu$ on $X$ such that
$$\int_X f d\mu = T(f),$$ 

\noindent for all $f \in C(X)$, and such that $||\mu|| = ||T||$ (where $||\mu||$ denotes the total variation norm of $\mu$).   

\end{thm}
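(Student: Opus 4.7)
The plan is to reduce the complex-valued statement to the case of positive linear functionals and then invoke the classical Riesz--Markov--Kakutani construction. First, I would split $T$ into real and imaginary parts by setting $T_1(f) = \tfrac{1}{2}(T(f) + \overline{T(\bar f)})$ and $T_2(f) = \tfrac{1}{2i}(T(f) - \overline{T(\bar f)})$, so that $T = T_1 + iT_2$ with each $T_j$ bounded and real-valued on $C(X,\R)$. I would then decompose each $T_j$ as a difference of two positive functionals via the Jordan-type formula
\[
T_j^+(f) := \sup\{T_j(g) : g \in C(X,\R),\ 0 \leq g \leq f\},\quad f \geq 0,
\]
extended by linearity to all of $C(X,\R)$, and set $T_j^- := T_j^+ - T_j \geq 0$, giving $T_j = T_j^+ - T_j^-$.

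Next, for each positive linear functional $\Lambda$ produced above, I would construct a representing finite positive Borel measure. Define an outer measure
\[
\mu^*(U) := \sup\{\Lambda(f) : f \in C(X),\ 0 \leq f \leq 1,\ \text{supp}(f) \subseteq U\}
\]
for open $U \subseteq X$, and $\mu^*(E) := \inf\{\mu^*(U) : E \subseteq U,\ U \text{ open}\}$ for arbitrary $E \subseteq X$. Standard estimates confirm $\mu^*$ is an outer measure, and Urysohn's lemma (available on the compact metric space $X$) shows that every Borel set is Carath\'eodory-measurable, so restriction to $\mathscr{B}(X)$ yields a finite regular Borel measure $\mu$. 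The identity $\int_X f\,d\mu = \Lambda(f)$ then follows for $0 \leq f \leq 1$ by sandwiching $f$ between simple functions built on the level sets $\{f > t\}$, and extends to all of $C(X)$ by linearity. Assembling the four positive measures coming from $T_1^{\pm}$ and $T_2^{\pm}$ produces the desired complex regular Borel measure $\mu$ with $T(f) = \int_X f\,d\mu$.

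For uniqueness, suppose $\nu$ is a second such measure; then $\int f\, d(\mu-\nu)=0$ for every $f\in C(X)$. Given a closed set $F \subseteq X$, Urysohn's lemma applied on shrinking open neighborhoods of $F$ produces continuous $f_n$ with $0 \leq f_n \leq 1$ and $f_n \downarrow \mathbf{1}_F$ pointwise, and dominated convergence forces $(\mu-\nu)(F)=0$. Outer regularity of both measures then extends this to all of $\mathscr{B}(X)$. For the norm equality, the bound $|T(f)| = \bigl|\int f\,d\mu\bigr| \leq \|f\|_\infty \|\mu\|$ gives $\|T\| \leq \|\mu\|$, while the reverse inequality uses the polar decomposition $d\mu = h\,d|\mu|$ with $|h|=1$ together with Lusin's theorem to approximate $\bar h$ by continuous functions of sup-norm at most $1$ and evaluate $T$ on them.

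The main obstacle is carrying out the Jordan-type decomposition cleanly: the defining supremum for $T_j^+$ is only a priori positively homogeneous and superadditive on the positive cone of $C(X,\R)$, so additivity on the cone, followed by a consistent linear extension to all of $C(X,\R)$, must be verified carefully via a lattice-theoretic argument on continuous functions. A secondary technical point is ensuring that the Carath\'eodory construction recovers a \emph{regular} Borel measure rather than merely a Borel measure, which is precisely where the compactness of $X$ and the availability of Urysohn's lemma become indispensable.
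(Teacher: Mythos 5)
The paper does not prove this statement at all: it is quoted verbatim as Theorem III.5.7 from Conway's \emph{A Course in Functional Analysis} and used as a black box (to turn the limit functionals $\mu_{g,h}$ into measures in the proof of Theorem \ref{SXcomplete}). So there is no in-paper argument to compare yours against; the only question is whether your sketch is a sound proof of the cited result.

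It is. What you have written is the standard Riesz--Markov--Kakutani argument in its usual four stages: reduction to real functionals via $T_1(f)=\tfrac12(T(f)+\overline{T(\bar f)})$, Jordan decomposition of a bounded real functional on $C(X,\R)$ into positive parts, the Carath\'eodory/outer-measure construction of a representing regular positive measure for each positive functional, and the isometry $\|\mu\|=\|T\|$ via polar decomposition plus Lusin. The two places where the real work lives are exactly the ones you flag: additivity of $T_j^+$ on the positive cone (which needs the decomposition $g = g_1+g_2$ with $0\le g_i\le f_i$ whenever $0\le g\le f_1+f_2$, available because $C(X,\R)$ is a lattice --- take $g_1=\min\{g,f_1\}$) and the regularity of the constructed measure, which on a compact metric space is automatic once the measure is finite and Borel. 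One small remark on the norm equality: after assembling $\mu$ from the four positive pieces, the inequality $\|T\|\le\|\mu\|$ is immediate as you say, and the reverse direction via Lusin applied to $\bar h$ (where $d\mu=h\,d|\mu|$) gives $|\mu|(X)=\lim T(f_n)\le\|T\|$; this is the correct and standard closing step. In short, the proposal is a faithful outline of the textbook proof of the result the paper imports, with the genuinely delicate points correctly identified rather than glossed over.
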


\begin{thm} \cite{Conway} \label{boundedsesqui}[Theorem II.2.2 in Conway] Let $u: \mathcal{H} \times \mathcal{H} \rightarrow \mathbb{C}$ be a bounded sesquilinear form with bound $M$.  There exists a unique operator $A \in \mathcal{B}(\mathcal{H})$ such that $u(g,h) = \langle Ag, h \rangle$ for all $g,h \in \mathcal{H}$, and such that $||A|| \leq M$.
\end{thm}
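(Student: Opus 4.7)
The plan is to construct $A$ by applying the Riesz representation theorem on $\mathcal{H}$ once for each fixed first argument of $u$, then to verify that the resulting pointwise-defined map is linear and bounded. Concretely, for each fixed $g \in \mathcal{H}$, I would observe that the map $h \mapsto \overline{u(g,h)}$ is a bounded linear functional on $\mathcal{H}$: linearity comes from the conjugate-linearity of $u$ in its second argument, and boundedness follows from the hypothesis $|u(g,h)| \leq M\|g\|\|h\|$, which gives functional norm at most $M\|g\|$. The Riesz representation theorem then produces a unique vector $y_g \in \mathcal{H}$ such that $\overline{u(g,h)} = \langle h, y_g \rangle$ for all $h$, equivalently $u(g,h) = \langle y_g, h \rangle$.

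Next, I would define $A : \mathcal{H} \rightarrow \mathcal{H}$ by $Ag := y_g$ and check the operator properties. Linearity is immediate from the linearity of $u$ in its first argument together with non-degeneracy of the inner product: for scalars $\alpha, \beta$ and $g_1, g_2 \in \mathcal{H}$, the identity $\langle A(\alpha g_1 + \beta g_2), h \rangle = \alpha u(g_1, h) + \beta u(g_2, h) = \langle \alpha Ag_1 + \beta Ag_2, h \rangle$ holds for every $h$, forcing equality of the two vectors on the left. For the norm bound, I would use the standard self-pairing trick: substituting $h = Ag$ yields
\[
\|Ag\|^2 = \langle Ag, Ag \rangle = u(g, Ag) \leq M \|g\| \|Ag\|,
\]
and dividing by $\|Ag\|$ (when nonzero) gives $\|Ag\| \leq M \|g\|$, hence $\|A\| \leq M$. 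Uniqueness is then a one-liner: if $A, B \in \mathcal{B}(\mathcal{H})$ both represent $u$, then $\langle (A-B)g, h \rangle = 0$ for all $g, h$, so $A = B$.

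The main obstacle is not analytic depth but bookkeeping: one must be careful that the convention for sesquilinearity (linear in the first slot, conjugate-linear in the second, or the reverse) matches the convention for the Hilbert space inner product, so that freezing the correct slot produces a \emph{linear} functional on $\mathcal{H}$ and the Riesz vector lands on the correct side of $\langle \cdot, \cdot \rangle$. Once this alignment is fixed, every remaining step is formal manipulation using the defining bound on $u$ and the non-degeneracy of the inner product.
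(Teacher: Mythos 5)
Your proof is correct and is the standard Riesz-representation argument for bounded sesquilinear forms; the paper gives no proof of this statement, simply citing Conway (Theorem II.2.2), whose proof is the same construction (freeze one slot, apply Riesz to get the representing vector, then verify linearity, the bound, and uniqueness exactly as you do). Your handling of the convention issue and the self-pairing trick $\|Ag\|^2 = u(g,Ag) \leq M\|g\|\|Ag\|$ are both fine, so nothing is missing.
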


Let $S(X)$ be the collection of all positive operator-valued measures with respect to the pair  $(X,\mathcal{H})$. Consider the metric $\rho$ on $S(X)$.  That is, 
\begin{equation} \rho(A,B) = \sup_{f \in \text{Lip}_1(X)}\left\{ \left|\left| \int f dA - \int f dB \right|\right| \right\}, \end{equation}

\noindent where $||\cdot||$ denotes the operator norm in $\mathcal{B}(\mathcal{H})$, and $A$ and $B$ are arbitrary members of $S(X)$.

\begin{thm} \label{SXcomplete} \cite{Davison3} [Davison]  The metric space $(S(X), \rho)$ is complete.  
\end{thm}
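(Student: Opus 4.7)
The plan is to adapt the strategy used for Theorem \ref{complete} (the projection-valued case) to positive operator-valued measures. Let $\{A_n\} \subseteq S(X)$ be Cauchy in the $\rho$ metric. For each $g, h \in \mathcal{H}$, form the complex Borel measure $(A_n)_{g,h}(\Delta) := \langle A_n(\Delta) g, h \rangle$. A standard Cauchy--Schwarz argument for POVMs --- bound $|\langle A_n(\Delta_i)g, h\rangle|$ on each piece of a Borel partition by $\langle A_n(\Delta_i)g,g\rangle^{1/2}\langle A_n(\Delta_i)h,h\rangle^{1/2}$, apply Cauchy--Schwarz over the sum, and use $A_n(X) = \text{id}_{\mathcal{H}}$ --- yields the uniform total variation bound $||(A_n)_{g,h}|| \leq ||g|| \, ||h||$.

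Next I would show that for each $f \in C(X)$ and each pair $g, h \in \mathcal{H}$, the sequence $\left\{\int_X f \, d(A_n)_{g,h}\right\}_{n=1}^{\infty}$ is Cauchy in $\mathbb{C}$. For $f \in \text{Lip}_1(X)$ this is immediate from
$$\left| \int f \, d(A_n)_{g,h} - \int f \, d(A_m)_{g,h} \right| = \left| \left\langle \left(\int f \, dA_n - \int f \, dA_m\right) g, h \right\rangle \right| \leq \rho(A_n, A_m) \, ||g|| \, ||h||,$$
and a general $f \in C(X)$ is handled by approximating it uniformly by Lipschitz functions (available on any compact metric space) and combining with the total variation bound above. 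Write $T_{g,h}(f)$ for the resulting limit. Then $|T_{g,h}(f)| \leq ||f||_{\infty} \, ||g|| \, ||h||$, so Theorem \ref{riesz1} supplies a unique complex regular Borel measure $\lambda_{g,h}$ on $X$ with $T_{g,h}(f) = \int f \, d\lambda_{g,h}$ and $||\lambda_{g,h}|| \leq ||g|| \, ||h||$.

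For each fixed $\Delta \in \mathscr{B}(X)$, the map $(g, h) \mapsto \lambda_{g,h}(\Delta)$ is sesquilinear (inherited from Remark \ref{Asesqui} by taking limits) and bounded by $||g|| \, ||h||$, so Theorem \ref{boundedsesqui} yields a unique $A(\Delta) \in \mathcal{B}(\mathcal{H})$ with $\langle A(\Delta)g, h \rangle = \lambda_{g,h}(\Delta)$. I would then verify $A \in S(X)$: positivity of $A(\Delta)$ follows because each $(A_n)_{g,g}$ is a positive measure, so its limit functional is positive on $C(X)$, which forces $\lambda_{g,g}$ to be a positive Borel measure; $A(\emptyset) = 0$ is trivial; $A(X) = \text{id}_{\mathcal{H}}$ follows from $\lambda_{g,h}(X) = \lim_n (A_n)_{g,h}(X) = \langle g, h \rangle$; and countable additivity of $A$ in the required weak sense is just countable additivity of the complex Borel measure $\lambda_{g,h}$.

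Finally, to show $\rho(A_n, A) \to 0$, fix $\epsilon > 0$ and pick $N$ so that $\rho(A_n, A_m) < \epsilon$ for all $n, m \geq N$. For $f \in \text{Lip}_1(X)$ and unit vectors $g, h \in \mathcal{H}$, the pointwise weak-operator convergence $\langle \int f \, dA_m \, g, h \rangle \to \langle \int f \, dA \, g, h \rangle$ combines with the Cauchy estimate to give
$$\left| \left\langle \left(\int f \, dA_n - \int f \, dA \right) g, h \right\rangle \right| = \lim_{m \to \infty} \left| \left\langle \left(\int f \, dA_n - \int f \, dA_m \right) g, h \right\rangle \right| \leq \epsilon,$$
and taking suprema over $g, h$ in the unit ball and over $f \in \text{Lip}_1(X)$ yields $\rho(A_n, A) \leq \epsilon$ for $n \geq N$. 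The delicate part is establishing the uniform total variation bound and the passage from convergence against $\text{Lip}_1(X)$ to convergence against all of $C(X)$; once these are in place, everything else is wiring the two functional-analytic black boxes (Theorems \ref{riesz1} and \ref{boundedsesqui}) into the setup.
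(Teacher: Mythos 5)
Your proposal is correct and follows essentially the same route as the paper's proof: establish Cauchyness of the integrals against $C(X)$ by Lipschitz approximation, pass to limit measures $\lambda_{g,h}$ via the Riesz representation theorem, reassemble the operators $A(\Delta)$ from the bounded sesquilinear form, verify the POVM axioms, and prove $\rho(A_n,A)\to 0$ by the $\lim_{m\to\infty}$ trick on the Cauchy estimate. The only differences are presentational (you invoke uniqueness in the Riesz theorem to get sesquilinearity of $\Delta\mapsto\lambda_{g,h}(\Delta)$, where the paper argues through closed sets and regularity), and your generalized Cauchy--Schwarz total variation bound matches the one the paper supplies for the limit measures.
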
 

\begin{proof} Let $\{A_n\}_{n=1}^{\infty} \subseteq S(X)$ be a Cauchy sequence in the $\rho$ metric.  We have the following claim. 

\begin{cla} \label{cauchy} Let $f \in C(X)$.  The sequence of operators $\{ A_n(f) := \int f dA_n\}_{n=1}^{\infty}$ is Cauchy in the operator norm.   

\end{cla}

\noindent Proof of claim: Note that the proof of this claim is identical to the proof of Claim 2.12 in \cite{Davison}.  Let $\epsilon > 0$.  Let $f = f_1 + if_2$, where $f_1, f_2 \in C_{\mathbb{R}}(X),$ where $C_{\mathbb{R}}(X)$ is the collection of real-valued continuous functions on $\mathbb{R}.$ Since $X$ is compact, by the density of Lipschitz functions in continuous functions we can choose $g_1, g_2 \in \text{Lip}(X)$ such that $||f_1-g_1||_{\infty} \leq \frac{\epsilon}{6}$ and $||f_2-g_2||_{\infty} \leq \frac{\epsilon}{6}$.  

There is a $K > 0$ such that $\frac{1}{K}g_1 \in \text{Lip}_1(X)$ and $\frac{1}{K}g_2 \in \text{Lip}_1(X)$.  Since $\{A_n\}_{n=1}^{\infty}$ is a Cauchy sequence in the $\rho$ metric, the sequence $\{A_n(\frac{1}{K}g_1)\}_{n=1}^{\infty}$ is Cauchy in the operator norm, and hence the sequence $\{A_n(g_1)\}_{n=1}^{\infty}$ is Cauchy in the operator norm.  Similarly, $\{A_n(g_2)\}_{n=1}^{\infty}$ is Cauchy in the operator norm.  Therefore, choose $N$ such that for $n,m \geq N$, $$||A_n(g_1) - A_m(g_1)|| \leq \frac{\epsilon}{6} \text{ and } ||A_n(g_2) - A_m(g_2)|| \leq \frac{\epsilon}{6}.$$  If $m,n \geq N$,
\begin{eqnarray*}
||A_n(f_1) - A_m(f_1)|| & \leq & ||A_n(f_1) - A_n(g_1) || + ||A_n(g_1) - A_m(g_1)||  \\
			      & + & ||A_m(g_1) - A_m(f_1)|| \\
			      & \leq & ||A_n(f_1-g_1)|| + \frac{\epsilon}{6} + ||A_m(f_1-g_1)|| \\
			      & \leq & \frac{\epsilon}{2}, \end{eqnarray*} 
			      
\noindent where the third inequality is because $||A_n(f_1-g_1)|| \leq ||f_1-g_1||_{\infty}$ and $||A_m(f_1-g_1)|| \leq ||f_1-g_1||_{\infty}$.  Similarly, $||A_n(f_2) - A_m(f_2)|| \leq \frac{\epsilon}{2}$.  Then if $n, m \geq N$, 
\begin{eqnarray*} 
||A_n(f) - A_m(f)|| & = & ||A_n(f_1 + if_2) - A_m(f_1 +if_2)|| \\
				  & = & ||(A_n(f_1) - A_m(f_1)) +i(A_n(f_2) + A_m(f_2))|| \\
				  & \leq & ||A_n(f_1) - A_m(f_1)|| + ||A_n(f_2) - A_m(f_2)|| \\
				  & \leq & \epsilon. \end{eqnarray*}
				  
\noindent This proves the claim.  \\

In particular, the fact that $\{\int f dA_n\}_{n=1}^{\infty}$ is Cauchy in the operator norm implies the following:  if $g,h \in \mathcal{H}$ and $f \in C(X)$, the sequence of complex numbers $\{\int_X f dA_{n_{g,h}}\}_{n=1}^{\infty}$ is Cauchy.  This is because we have the bound
$$\left| \int_X  f dA_{n_{g,h}} - \int_X f dA_{m_{g,h}} \right| = \left| \left \langle \left( \int f dA_n - \int f dA_m \right) g, h \right \rangle \right| \leq $$
$$\left| \left| \int f dA_n - \int f dA_m \right| \right| ||g|| ||h||,$$ 

\noindent and the last term goes to zero as $m$ and $n$ approach infinity.  

For $g, h \in \mathcal{H}$, define $\mu_{g,h}: C(X) \rightarrow \mathbb{C}$ by $f \mapsto \lim_{n \rightarrow \infty} \int f dA_{n_{g,h}}$, which is well defined by the above discussion, and since $\mathbb{C}$ is complete.  Observe that $\mu_{g,h}$ is a bounded linear functional.  We will show that it is bounded, and leave the proof of linearity to the reader.  Let $f \in C(X)$.  Then
$$|\mu_{g,h}(f)| = \left| \lim_{n \rightarrow \infty} \int_X f dA_{n_{g,h}} \right| =  \lim_{n \rightarrow \infty} \left|\int_X f dA_{n_{g,h}} \right|.$$

\noindent Now for all $n$
$$\left|\int_X f dA_{n_{g,h}} \right| \leq \int_X |f| d|A_{n_{g,h}}| \leq ||f||_{\infty} ||g||||h||,$$ 

\noindent and hence
 $$\lim_{n \rightarrow \infty} \left|\int_X f dA_{n_{g,h}} \right| \leq ||f||_{\infty} ||g||||h||.$$
 
 \noindent This shows that $\mu_{g,h}$ is bounded by $||g||||h||$.  We can now invoke Theorem \ref{riesz1} to conclude that $\mu_{g,h}$ is a measure.  
 
The map $[g,h] \in \mathcal{H} \times \mathcal{H} \mapsto \mu_{g,h}$ is sesquilinear.  Indeed, we will show that $[g,h] \mapsto \mu_{g,h}$ is linear in the first coordinate.  The remaining properties of sesquilinearity are proved with a similar approach, and are left to the reader. 

Let $g,h,k \in \mathcal{H}$, and let $f \in C(X)$.  Then
\begin{eqnarray*} 
\int_X f d\mu_{g+h,k} & = & \lim_{n \rightarrow \infty} \int_X f dA_{n_{g+h,k}} \\
				       & = & \lim_{n \rightarrow \infty} \left( \int_X f dA_{n_{g,k}} + \int_X f dA_{n_{h,k}} \right) \\
				       & = & \int_X f d\mu_{g,k} + \int_X f d\mu_{h,k}, 			    \end{eqnarray*}
				       
\noindent where the second equality is because of Remark \ref{Asesqui}.  

Consider a closed subset $C \subseteq X$, and choose a sequence of functions $\{f_m\}_{m=1}^{\infty} \subseteq C(X)$ such that $f_m \downarrow {\bf{1}}_C$ pointwise. For instance, we could let $f_m(x) = \max\{1 - md(x,C), 0\}.$ By the dominated convergence theorem, 
\begin{eqnarray*} 
\int_X {\bf{1}}_C d\mu_{g+h,k} & = & \lim_{m \rightarrow \infty} \int_X f_m d\mu_{g+h,k} \\
					        & = & \lim_{m \rightarrow \infty} \left( \int_X f_m d\mu_{g,k} + \int_X f_m d\mu_{h,k} \right) \\
					        & = & \int_X {\bf{1}}_C d\mu_{g,k} + \int_X {\bf{1}}_C d\mu_{h,k}.  \end{eqnarray*} 
					        
 \noindent Hence, for any closed $C \subseteq X$
 
 \begin{equation} \label{closed} \mu_{g+h,k} (C) = \mu_{g,k}(C) + \mu_{h,k}(C). \end{equation} 
 
By decomposing the measures $\mu_{g+h,k}, \mu_{g,k}, \mu_{h,k}$ into their real and imaginary parts, we can show that (\ref{closed}) is equivalent to the following:
\begin{equation} \label{part1}  \mathrm{Re}\mu_{g+h,k} (C) = \mathrm{Re}\mu_{g,k}(C) + \mathrm{Re}\mu_{h,k}(C), \end{equation} 
 
 \noindent and
\begin{equation} \label{part2}  \mathrm{Im}\mu_{g+h,k} (C) = \mathrm{Im}\mu_{g,k}(C) + \mathrm{Im}\mu_{h,k}(C). \end{equation}

By further decomposing $ \mathrm{Re}\mu_{g+h,k},  \mathrm{Re}\mu_{g,k},  \mathrm{Re}\mu_{h,k}$ into their positive and negative parts (denoted $\mathrm{Re}\mu_{g+h,k}^{+}$ and $\mathrm{Re}\mu_{g+h,k}^{-}$ respectively), we can show, by rearranging terms, that (\ref{part1}) is equivalent to 
\begin{equation} \label{seconde} M_1(C) = M_2(C), \end{equation}

\noindent where $M_1= \mathrm{Re}\mu_{g+h,k}^{+} +  \mathrm{Re}\mu_{g,k}^{-} +  \mathrm{Re}\mu_{h,k}^{-},$ and $M_2 = \mathrm{Re}\mu_{g+h,k}^{-} +  \mathrm{Re}\mu_{g,k}^{+} +  \mathrm{Re}\mu_{h,k}^{+}.$

Since $M_1$ and $M_2$ are positive Borel measures on a metric space, $M_1$ and $M_2$ are regular (see Remark \ref{regular}).  That is, we can conclude that $M_1(\Delta) = M_2(\Delta)$ for any Borel subset $\Delta \in \mathscr{B}(X)$.  By invoking the equivalence of (\ref{part1}) and (\ref{seconde}), we have that (\ref{part1}) is true for all $\Delta\in \mathscr{B}(X)$.  A similar approach, will yield that (\ref{part2}) is true for all $\Delta \in \mathscr{B}(X)$.  Hence, (\ref{closed}) is true for all $\Delta \in \mathscr{B}(X)$.  This shows linearity in the first coordinate.  As mentioned above, the following additional properties listed below are proved similarly: 

\begin{itemize}

\item  Let $g,h,k \in \mathcal{H}$.  Then $\mu_{g,h+k} = \mu_{g,h} + \mu_{g,k}.$

\item Let $\alpha \in \mathbb{C}$ and $g,h\in \mathcal{H}$.  Then $\mu_{\alpha g, h} = \alpha\mu_{g,h}.$

\item Let $\beta \in \mathbb{C}$ and $g,h\in \mathcal{H}$.  Then $\mu_{g, \beta h} = \overline{\beta} \mu_{g,h}.$

\end{itemize}

\noindent Hence, the map $[g,h] \mapsto \mu_{g,h}$ is sesquilinear.  We also note that $\mu_{g,h}$ inherits the following three additional properties: 

\begin{itemize} 

\item For $h \in \mathcal{H}$, $\mu_{h,h}$ is a positive Borel measure on $X$. 

\item For $g,h \in \mathcal{H}$, $\mu_{g,h}$ has total variation less than or equal to $||g||||h||$. 

\item For $g,h \in \mathcal{H}$, $\overline{\mu_{g,h}} = \mu_{h,g}$. 

\end{itemize}

\noindent We will spend a short time justifying the second item in the above list.  Suppose that $\Delta_1, ..., \Delta_n$ is a collection of disjoint subsets of $\mathscr{B}(X)$.  Then using a generalized Schwarz inequality for positive sesquilinear forms we calculate that
$$\sum_{k=1}^n |\mu_{g,h}(\Delta_k)| \leq \sum_{k=1}^n \left(\mu_{g,g}(\Delta_k) \mu_{h,h}(\Delta_k) \right)^{\frac{1}{2}} \leq $$
$$\left( \sum_{k=1}^n\mu_{g,g}(\Delta_k) \sum_{k=1}^n\mu_{h,h}(\Delta_k) \right)^{\frac{1}{2}} = (\mu_{g,g}(X) \mu_{h,h}(X))^{\frac{1}{2}} = ( ||g||^2 ||h||^2 )^{\frac{1}{2}} = ||g||||h||,$$

\noindent which shows that the total variation of $\mu_{g,h}$ is less than or equal to $||g||||h||$. 

Let $\Delta \in \mathscr{B}(X)$.  The map $[g,h] \mapsto \int_X {\bf{1}}_{\Delta} d\mu_{g,h}$ is a bounded sesquilinear form with bound $1$.  Indeed, 
$$|[g,h]| \leq ||{\bf{1}}_{\Delta}||_{\infty} ||g||||h|| = ||g||||h||.$$

By Theorem \ref{boundedsesqui}, there exists a unique bounded operator, $A(\Delta) \in \mathcal{B}(\mathcal{H})$, such that for all $g,h \in \mathcal{H}$
$$\langle A(\Delta)g, h \rangle = \int_X {\bf{1}}_{\Delta}  d\mu_{g,h},$$

\noindent with $||A(\Delta)|| \leq 1$.  Accordingly, define $A: \mathscr{B}(X) \rightarrow \mathcal{B}(\mathcal{H})$ by $\Delta \mapsto A(\Delta)$, and note that for $g, h \in \mathcal{H}$, $A_{g,h} = \mu_{g,h}$.

\begin{cla} $A$ is a positive operator-valued measure. 
\end{cla} 

\noindent Proof of claim: 

\begin{enumerate} 

\item Let $\Delta \in \mathscr{B}(X)$, and $h \in \mathcal{H}$.  Then 
$$\langle A(\Delta)h,h \rangle = \int_X {\bf{1}}_{\Delta}  d\mu_{h,h} \geq 0.$$ 

\noindent Hence, $A(\Delta)$ is a positive operator.

\item Let $h \in \mathcal{H}$.  Then
$$\langle A(X)h, h \rangle = \int_X d\mu_{h,h} = \mu_{h,h}(X) = \langle h, h \rangle,$$ and 
$$\langle A(\emptyset)h, h \rangle = \int_X {\bf{1}}_{\emptyset} d\mu_{h,h} = \mu_{h,h}(\emptyset) = 0.$$

\noindent Hence, $A(X) = \text{id}_{\mathcal{H}}$ and $A(\emptyset) = 0.$  

\item If $\{\Delta_n\}_{n=1}^{\infty}$ are pairwise disjoint sets in $\mathscr{B}(X)$, then for all $g,h \in \mathcal{H},$
$$\left \langle A \left(\bigcup_{n=1}^{\infty} \Delta_n \right)g , h \right \rangle = \int_X {\bf{1}}_{\bigcup_{n=1}^{\infty} \Delta_n} d\mu_{g,h} = \sum_{n=1}^{\infty} \mu_{g,h} (\Delta_n) =$$
$$\sum_{n=1}^{\infty} \int_X {\bf{1}}_{\Delta_n} d\mu_{g,h} = \sum_{n=1}^{\infty}\langle A(\Delta_n)  g, h \rangle. $$

\end{enumerate} 

\noindent This completes the proof of the claim.

We will now show that $A_n \rightarrow A$ in the $\rho$ metric.     Let $\epsilon > 0$.  Choose an $N$ such that for $n,m \geq N$, $\rho(A_n,A_m) \leq \epsilon.$  Let $ f \in \text{Lip}_1(X)$.  If $n \geq N$, and $h \in \mathcal{H}$ with $||h||=1$,
\begin{eqnarray*} 
\left| \left \langle \left( \int f dA_n - \int f dA \right) h, h \right \rangle \right| & = & \left| \int_X f dA_{n_{h,h}} - \int_X f dA_{h,h} \right| \\
& = & \lim_{m \rightarrow \infty}  \left| \int_X f dA_{n_{h,h}} - \int_X f dA_{m_{h,h}} \right| \\
& = & \lim_{m \rightarrow \infty} \left| \left \langle \left( \int f dA_n - \int f dA_m \right)h, h \right \rangle \right|,  \\
\end{eqnarray*} 

\noindent where the second equality is because $\int f dA_{h,h} = \mu_{h,h}(f) = \lim_{m \rightarrow \infty} \int f dA_{m_{h,h}}$.  For $m \geq N$
\begin{eqnarray*} 
\left| \left \langle \left( \int f dA_n - \int f dA_m \right)h, h \right \rangle \right| & \leq & \left| \left| \int f dA_n - \int f dA_m \right| \right| ||h||^2 \\
& = & \left| \left| \int f dA_n - \int f dA_m \right| \right| \\
& \leq & \rho(A_n,A_m) \\
& \leq & \epsilon.
\end{eqnarray*} 

\noindent Hence
$$\lim_{m \rightarrow \infty} \left| \left \langle \left( \int f dA_n - \int f dA_m \right)h, h \right \rangle \right| \leq \epsilon,$$ 

\noindent and therefore 
$$\left| \left| \int f dA_n - \int f dA \right| \right| \leq \epsilon.$$

\noindent Since the choice of $N$ is independent of $f \in \text{Lip}_1(X)$, $\rho(A_n, A) \leq \epsilon$, which shows that the metric space $(S(X), \rho)$ is complete.

\end{proof}

\subsection{A Fixed POVM Associated to an IFS} 

Let $X$ be the attractor set associated to an arbitrary IFS (with possibly essential overlap), and consider the Hilbert space $L^2(X, \mu)$, where $\mu$ is the Hutchinson measure. Recall that the maps

\begin{center} $F_i: L^2(X,\mu) \rightarrow L^2(X, \mu)$ given by $\displaystyle{\phi \mapsto \frac{1}{\sqrt{N}} (\phi \circ \sigma_i)}$ \end{center} 

\noindent for all $i = 0, ..., N-1,$ satisfy the operator identity

$$\sum_{i=0}^{N-1} F_i^*F_i = \text{id}_{\mathcal{H}}.$$ 

\noindent This is Theorem \ref{Kornelson} stated in the introduction. If we define $S_i = F_i^*$, we can rewrite the above operator identity as 

\begin{equation} \sum_{i=0}^{N-1} S_iS_i^* = \text{id}_{\mathcal{H}}.\end{equation} 

As desired, the following theorem generalizes the map $U$ in Theorem \ref{U} to positive operator-valued measures, in the case of an arbitrary IFS. 

\begin{thm} \label{contraction} [Davison] The map $V: S(X) \rightarrow S(X)$ given by $$B(\cdot) \mapsto \sum_{i=0}^{N-1} S_i B(\sigma_i^{-1}(\cdot))S_i^*$$ 

\noindent is a Lipschitz contraction in the $\rho$ metric. 

\end{thm}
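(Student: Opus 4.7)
Let $r := \max_{0 \leq i \leq N-1} \text{Lip}(\sigma_i) < 1$; this will serve as the contraction constant. The plan is to verify that $V$ maps $S(X)$ into $S(X)$, to compute $\int f \, dV(B)$ explicitly for $f \in \text{Lip}_1(X)$, and then to exploit the identity $\sum_{i=0}^{N-1} S_i S_i^* = \text{id}_{\mathcal{H}}$ to bound the operator norm of $\int f \, dV(A) - \int f \, dV(B)$ by $r \rho(A,B)$.

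First, I would check $V(B) \in S(X)$ for $B \in S(X)$. Positivity of each $V(B)(\Delta) = \sum_i S_i B(\sigma_i^{-1}(\Delta)) S_i^*$ is immediate from
$\langle S_i B(\sigma_i^{-1}(\Delta)) S_i^* v, v \rangle = \langle B(\sigma_i^{-1}(\Delta)) S_i^* v, S_i^* v \rangle \geq 0$,
and sums of positive operators are positive. The normalization $V(B)(X) = \text{id}_{\mathcal{H}}$ and $V(B)(\emptyset) = 0$, along with countable additivity in the weak operator sense, are the computations already displayed following Theorem \ref{U}; they use only $\sum_i S_i S_i^* = \text{id}_{\mathcal{H}}$, precisely as anticipated by Remark \ref{POVM}.

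Next, I would identify $\int f \, dV(B)$ via pushforward. For $g, h \in \mathcal{H}$ the scalar measure $V(B)_{g,h}$ decomposes as $\sum_i (B \circ \sigma_i^{-1})_{S_i^* g, S_i^* h}$, and the ordinary change-of-variables identity for pushforward measures gives
$$\int_X f \, dV(B) = \sum_{i=0}^{N-1} S_i \left( \int_X (f \circ \sigma_i) \, dB \right) S_i^*,$$
first for characteristic functions, then by linearity on simple functions, and finally by $\|\cdot\|_\infty$-continuity for $f \in \text{Lip}_1(X) \subseteq C(X)$.

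Finally, set $T_i := \int (f \circ \sigma_i) \, dA - \int (f \circ \sigma_i) \, dB$. Since $f \circ \sigma_i$ is real-valued and $A,B$ take values in self-adjoint operators, each $T_i$ is self-adjoint. Because $\sigma_i$ has Lipschitz constant at most $r$, the function $r^{-1}(f \circ \sigma_i)$ lies in $\text{Lip}_1(X)$, so $\|T_i\| \leq r \rho(A,B)$. Then $\int f \, dV(A) - \int f \, dV(B) = \sum_i S_i T_i S_i^*$ is self-adjoint, and using $\sum_i S_i S_i^* = \text{id}_{\mathcal{H}}$ together with the formula $\|T\| = \sup_{\|v\|=1} |\langle T v, v \rangle|$ for self-adjoint $T$, we have for any unit $v \in \mathcal{H}$
$$\left| \left\langle \sum_i S_i T_i S_i^* v, v \right\rangle \right| \leq \sum_i \|T_i\| \, \|S_i^* v\|^2 \leq \bigl( \max_i \|T_i\| \bigr) \sum_i \langle S_i S_i^* v, v \rangle = \max_i \|T_i\| \leq r \rho(A,B).$$
Taking the supremum over $v$ and then over $f \in \text{Lip}_1(X)$ yields $\rho(V(A), V(B)) \leq r \rho(A,B)$.

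The main obstacle is the operator-norm estimate $\|\sum_i S_i T_i S_i^*\| \leq \max_i \|T_i\|$. It is precisely here that the weaker hypothesis $\sum_i S_i S_i^* = \text{id}_{\mathcal{H}}$ suffices: one does not need the orthogonality $S_i^* S_j = \delta_{i,j} \text{id}_{\mathcal{H}}$ from the full Cuntz relations. The two crucial inputs are the self-adjointness of each $T_i$, which reduces the operator norm to the quadratic form and is the reason we may restrict to real-valued Lipschitz test functions, and the resolution of identity $\sum_i S_i S_i^* = \text{id}_{\mathcal{H}}$, which converts $\sum_i \|S_i^* v\|^2$ into $\|v\|^2 = 1$.
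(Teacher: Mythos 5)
Your argument is correct, and it is actually more complete than what the paper prints. The paper's own proof consists of exactly your first step --- verifying positivity of $V(B)(\Delta)$ via $\langle S_i B(\sigma_i^{-1}(\Delta))S_i^*h,h\rangle = \langle B(\sigma_i^{-1}(\Delta))S_i^*h,S_i^*h\rangle \geq 0$ --- and then defers the contraction estimate entirely to the earlier reference for Theorem \ref{U}, with the remark that ``the proof follows essentially the same line of reasoning.'' What you supply, and the paper does not, is the adaptation of that reasoning to the POVM setting: in the projection-valued case one can get $\bigl\| \sum_i S_i T_i S_i^* \bigr\| \leq \max_i \|T_i\|$ from the orthogonality relation $S_i^*S_j = \delta_{i,j}\,\text{id}_{\mathcal{H}}$, which is unavailable here. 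Your replacement --- reduce to the quadratic form using self-adjointness of $T_i = \int (f\circ\sigma_i)\,dA - \int (f\circ\sigma_i)\,dB$ (legitimate because $\text{Lip}_1(X)$ consists of real-valued functions), then convert $\sum_i \|S_i^*v\|^2$ into $\|v\|^2$ via the resolution of identity of Theorem \ref{Kornelson} --- is exactly the right substitute, and it makes precise the claim of Remark \ref{POVM} that part (1) of the Cuntz relations alone carries the structure. The pushforward identity $\int f\,dV(B) = \sum_i S_i \bigl( \int (f\circ\sigma_i)\,dB \bigr) S_i^*$ and the observation that $r^{-1}(f\circ\sigma_i) \in \text{Lip}_1(X)$ are both standard and correctly executed, so $\rho(V(A),V(B)) \leq r\,\rho(A,B)$ follows as you state.
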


\begin{proof} The proof of this theorem follows essentially the same line of reasoning as in the proof of Theorem \ref{U}. The correct version of the proof of Theorem \ref{U} can be found in \cite{Davison2}. At present, the only item we need to show is that $V$ is well defined as a map on $S(X)$. Indeed, let $B \in S(X),$ and $\Delta \in \mathscr{B}(X)$. Then $V(B)(\Delta)$ is a positive operator on $\mathcal{H}$. That is, if $h \in \mathcal{H}$, 

\begin{eqnarray*} 
\langle B(\Delta)h, h \rangle & = & \left \langle \left( \sum_{i=0}^{N-1} S_i B(\sigma_i^{-1}(\Delta))S_i^* \right) h, h \right \rangle \\
			      & = &  \sum_{i=0}^{N-1} \left \langle  S_i B(\sigma_i^{-1}(\Delta))S_i^*  h, h \right \rangle \\
			      & = & \sum_{i=0}^{N-1} \left \langle  B(\sigma_i^{-1}(\Delta))S_i^*  h, S_i^* h \right \rangle \\
			      & \geq & 0,
\end{eqnarray*} 

\noindent since $B(\sigma_i^{-1}(\Delta))$ is a positive operator for all $0 \leq i \leq N-1$. We leave it to the reader to verify that $B$ satisfies the remaining properties of a POVM.

\end{proof}

\begin{cor} \label{FixedPOVM} [Davison] By the Contraction Mapping Theorem, there exists a unique positive operator-valued measure, $A \in S(X)$, such that

\begin{equation} \label{projfixed} A(\cdot) = \sum_{i=0}^{N-1} S_i A(\sigma_i^{-1}(\cdot)) S_i^*. \end{equation}

\end{cor}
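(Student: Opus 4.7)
The plan is to invoke the Banach Fixed Point Theorem (Contraction Mapping Principle) directly. Two of the ingredients are already in place from the preceding results of this section: Theorem \ref{SXcomplete} establishes that $(S(X), \rho)$ is a complete metric space, and Theorem \ref{contraction} establishes that the map $V: S(X) \to S(X)$ defined by
\[
V(B)(\cdot) = \sum_{i=0}^{N-1} S_i B(\sigma_i^{-1}(\cdot)) S_i^*
\]
is a Lipschitz contraction with respect to $\rho$. Since a Lipschitz contraction on a complete metric space has exactly one fixed point, there exists a unique $A \in S(X)$ with $V(A) = A$, and unpacking the definition of $V$ yields precisely equation (\ref{projfixed}).

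Thus the proof reduces to simply citing the preceding two results and quoting the Banach Fixed Point Theorem; no further computation is required. Indeed, I would write something like: apply the Contraction Mapping Principle to $V: S(X) \to S(X)$, using completeness from Theorem \ref{SXcomplete} and the contraction property from Theorem \ref{contraction}, to obtain a unique $A \in S(X)$ with $V(A) = A$, which is (\ref{projfixed}).

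Since the statement is a formal corollary rather than a theorem with its own content, there is no genuine obstacle at this step; all the technical difficulty has been absorbed into Theorems \ref{SXcomplete} and \ref{contraction}. The only minor subtlety worth flagging (though it lies upstream, not in the corollary itself) is confirming that the pullback $\Delta \mapsto B(\sigma_i^{-1}(\Delta))$ is well defined and remains a POVM for each $i$, so that $V$ actually lands in $S(X)$; but this is handled in the proof of Theorem \ref{contraction} together with the identity $\sum_i S_i S_i^* = \mathrm{id}_{\mathcal H}$, which ensures $V(B)(X) = \mathrm{id}_{\mathcal H}$. With those already established, the corollary is immediate.
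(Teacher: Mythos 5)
Your proposal is correct and matches the paper's approach exactly: the corollary is an immediate application of the Contraction Mapping Principle, using the completeness of $(S(X),\rho)$ from Theorem \ref{SXcomplete} and the contraction property of $V$ from Theorem \ref{contraction}. The paper offers no further argument for this corollary, so nothing is missing.
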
 

\subsection{Dilation of the Fixed POVM}

We begin with some preliminary facts and definitions. Let $N \in \mathbb{N}$ such that $N \geq 2$, let $\mathcal{S} = \{\sigma_0,...,\sigma_{N-1}\}$ be an IFS (with possibly essential overlap) whose attractor set is $X$, and let $F_i: L^2(X,\mu) \rightarrow L^2(X,\mu)$ be as defined above. Recall that we previously defined $\Gamma_N = \{0,...,N-1\}$. If we let $\Omega = \prod_{1}^{\infty} \Gamma_N,$ it is well known that $\Omega$ is a compact metric space. The metric $m$ on $\Omega$ is given by

$$m(\alpha, \beta) = \frac{1}{2^j}$$ 

\noindent where $\alpha, \beta \in \Omega,$ and $j \in \mathbb{N}$ is the first entry at which $\alpha$ and $\beta$ differ. 

We next define the shift maps on this compact metric space. Indeed, for $0 \leq i \leq N-1$, let $\eta_i: \Omega \rightarrow \Omega$ be given by $\eta_i((\alpha_1, \alpha_2,...,)) = (i, \alpha_1, \alpha_2,,...,)$, and define $\eta: \Omega \rightarrow \Omega$ given by $\eta((\alpha_1,\alpha_2, \alpha_3,...)) = (\alpha_2, \alpha_3,....,)$. 

\begin{itemize} 

\item The maps $\eta_i$ are Lipschitz contractions on $\Omega$ in the $m$ metric, and therefore, the family of maps $\mathcal{T} = \{\eta_0,...,\eta_{N-1}\}$ constitutes an IFS on $\Omega$. 

\item The compact metric space $\Omega$ is itself the attractor set associated to the IFS $\mathcal{T}$.

\item The Hutchinson measure $P$ on $\Omega$ associated to the IFS $\mathcal{T}$ is called the Bernoulli measure, and it satisfies

$$P(\cdot) = \frac{1}{N} \sum_{i=0}^{N-1} P(\eta_i^{-1}(\cdot)).$$

\item The map $\eta$ is a left inverse for each $\eta_i$, meaning that $\eta \circ \eta_i = \text{id}_{\Omega}$ for each $0 \leq i \leq N-1.$ 

\end{itemize} 

Since the IFS $\mathcal{T}$ is disjoint, for each $0 \leq i \leq N-1$ we can define $T_i: L^2(\Omega,P) \rightarrow L^2(\Omega,P)$ by 

$$\phi \mapsto \sqrt{N} (\phi \circ \eta) {\bf{1}}_{\eta_i(X)},$$

\noindent and its adjoint $T_i^*: L^2(\Omega,P) \rightarrow L^2(\Omega, P)$ by 

$$\phi \mapsto \frac{1}{\sqrt{N}} (\phi \circ \eta_i),$$

\noindent such that the family of operators $\{T_i\}_{i=0}^{N-1}$ satisfies the Cuntz relations. Consequently, there exists a unique projection-valued measure, $E$, with respect to the pair $(\Omega, L^2(\Omega,P))$ such that 

\begin{equation} \label{pvm_alphabet} E(\cdot) = \sum_{i=0}^{N-1} T_iE(\eta_i^{-1}(\cdot))T_i^*. \end{equation} 

\noindent We also have from Corollary \ref{FixedPOVM} that there exists a unique POVM, A, with respect to the pair $(X, L^2(X,\mu))$ such that 

\begin{equation} \label{povm} A(\cdot) = \sum_{i=0}^{N-1} S_iA(\sigma_i^{-1}(\cdot)S_i^*, \end{equation} 

\noindent where $S_i = F_i^*.$ 

For each $\alpha \in \Omega$, define $\pi(\alpha) = \cap_{n=1}^{\infty} \sigma_{\alpha_1} \circ ... \circ \sigma_{\alpha_n}(X),$ where $\alpha = (\alpha_1, \alpha_2, ..., \alpha_n,...)$. Since the maps $\sigma_i$ are all contractive, $\pi(\alpha)$ is a single point in $X$. Define the map $\pi: \Omega \rightarrow X$ by $\alpha \rightarrow \pi(\alpha)$ as the coding map. 

\begin{lem} \cite{Kornelson} [Jorgensen, Kornelson, Shuman] The coding map is continuous. Moreover, for all $0 \leq i \leq N-1,$ we have the relation 

\begin{equation} \label{relation} \pi \circ \eta_i = \sigma_i \circ \pi. \end{equation} 

\end{lem}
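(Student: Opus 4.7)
The proposal is to handle the two claims of the lemma separately, first the relation and then the continuity, since the relation is a one-line computation on the definition of $\pi$, while continuity requires controlling the diameter of cylinder sets under the contractions $\sigma_i$.

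For the relation $\pi \circ \eta_i = \sigma_i \circ \pi$, I would fix $\alpha = (\alpha_1, \alpha_2, \ldots) \in \Omega$, so that $\eta_i(\alpha) = (i, \alpha_1, \alpha_2, \ldots)$. Directly from the definition,
$$\pi(\eta_i(\alpha)) = \bigcap_{n=1}^{\infty} \sigma_i \circ \sigma_{\alpha_1} \circ \cdots \circ \sigma_{\alpha_{n-1}}(X).$$
The point $q := \sigma_i(\pi(\alpha))$ belongs to every set in this nested family because $\pi(\alpha) \in \sigma_{\alpha_1} \circ \cdots \circ \sigma_{\alpha_{n-1}}(X)$ for each $n$. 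Since the intersection contains a unique point (each $\sigma_j$ is a contraction, so the diameters shrink to zero), that unique point must be $q$. This gives $\pi(\eta_i(\alpha)) = \sigma_i(\pi(\alpha))$.

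For continuity, I would use the standard diameter-shrinking argument. Let $r = \max_{0 \leq i \leq N-1} r_i < 1$, where $r_i$ is the Lipschitz constant of $\sigma_i$, and note that $X$ is compact so $D := \text{diam}(X) < \infty$. Then
$$\text{diam}\bigl(\sigma_{\alpha_1} \circ \cdots \circ \sigma_{\alpha_n}(X)\bigr) \leq r^n D,$$
independently of $\alpha$. Given $\epsilon > 0$, pick $n$ with $r^n D < \epsilon$. If $m(\beta, \alpha) < 2^{-n}$, then $\beta$ and $\alpha$ agree in their first $n$ coordinates, so both $\pi(\alpha)$ and $\pi(\beta)$ lie inside the same cylinder $\sigma_{\alpha_1} \circ \cdots \circ \sigma_{\alpha_n}(X)$, whence $d(\pi(\alpha), \pi(\beta)) < \epsilon$. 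This proves continuity (in fact uniform continuity, even H\"older continuity).

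The only subtle point is ensuring the intersection in the definition of $\pi(\alpha)$ really is a single point, which is where I use compactness of $X$ together with the diameter bound $r^n D \to 0$; nothing in the argument requires the IFS to have non-essential overlap, so the lemma applies in the full generality needed for the present paper. I do not anticipate any serious obstacle here — both parts are direct consequences of the definitions and standard facts about contractive IFS.
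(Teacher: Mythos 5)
Your proof is correct. Note that the paper itself gives no proof of this lemma --- it is quoted from Jorgensen--Kornelson--Shuman \cite{Kornelson} --- so there is nothing internal to compare against; your argument (the nested compact sets $\sigma_{\alpha_1}\circ\cdots\circ\sigma_{\alpha_n}(X)$ have diameter at most $r^nD\to 0$, which simultaneously makes $\pi$ well defined, forces $\sigma_i(\pi(\alpha))$ to be the unique point of $\bigcap_n \sigma_i\circ\sigma_{\alpha_1}\circ\cdots\circ\sigma_{\alpha_{n-1}}(X)$, and yields uniform continuity since agreement of $\alpha,\beta$ in the first $n$ coordinates places $\pi(\alpha),\pi(\beta)$ in a common set of diameter at most $r^nD$) is the standard one and is complete, and you are right that none of it uses non-essential overlap.
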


We now are prepared to state a result from Jorgensen and his colleagues, which we will use in our below discussion. 

\begin{thm} \cite{Kornelson} \label{intertwining} [Jorgensen, Kornelson, Shuman]

\begin{enumerate} 

\item The operator $V: L^2(X, \mu) \rightarrow L^2(\Omega, P)$ given by

$$V(f) = f \circ \pi$$

\noindent is isometric. 

\item The following intertwining relations hold: 

$$VF_i = T_i^*V,$$

\noindent for all $0 \leq i \leq N-1$. 

\end{enumerate} 

\end{thm}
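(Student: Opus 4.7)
The plan is to prove the two parts in sequence, with the isometry claim being the more substantive one.

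For part (1), the strategy is to identify $P \circ \pi^{-1}$ as the Hutchinson measure $\mu$ and invoke uniqueness. Writing out the norm, I would compute
\[
\|Vf\|_{L^2(\Omega,P)}^2 = \int_\Omega |f \circ \pi|^2 \, dP = \int_X |f|^2 \, d(P \circ \pi^{-1}),
\]
where the last equality is the standard change-of-variables formula (valid because $\pi$ is continuous, hence Borel measurable, by the preceding lemma). The goal then becomes showing $P \circ \pi^{-1} = \mu$. Since $\mu$ is characterized as the unique Borel probability measure on $X$ satisfying the Hutchinson fixed-point relation (\ref{hutmeasure}), it suffices to verify that the pushforward measure $P \circ \pi^{-1}$ is a Borel probability measure on $X$ satisfying the same relation. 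Probability is immediate: $P \circ \pi^{-1}(X) = P(\Omega) = 1$. For the fixed-point relation, I would use the intertwining identity $\pi \circ \eta_i = \sigma_i \circ \pi$ from equation (\ref{relation}), which gives $\eta_i^{-1}(\pi^{-1}(\Delta)) = \pi^{-1}(\sigma_i^{-1}(\Delta))$, and then apply the Bernoulli fixed-point relation for $P$:
\[
P\circ\pi^{-1}(\Delta) = P(\pi^{-1}(\Delta)) = \frac{1}{N}\sum_{i=0}^{N-1} P(\eta_i^{-1}(\pi^{-1}(\Delta))) = \frac{1}{N}\sum_{i=0}^{N-1} (P \circ \pi^{-1})(\sigma_i^{-1}(\Delta)).
\]
By uniqueness of the Hutchinson measure, $P \circ \pi^{-1} = \mu$, which yields $\|Vf\|_{L^2(\Omega,P)}^2 = \|f\|_{L^2(X,\mu)}^2$, proving isometry.

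For part (2), the verification is a direct computation using the definitions and the same intertwining relation $\pi \circ \eta_i = \sigma_i \circ \pi$. For any $f \in L^2(X,\mu)$,
\[
(VF_i)(f) = V\!\left(\tfrac{1}{\sqrt{N}}(f \circ \sigma_i)\right) = \tfrac{1}{\sqrt{N}}\, f \circ \sigma_i \circ \pi,
\]
while
\[
(T_i^* V)(f) = T_i^*(f \circ \pi) = \tfrac{1}{\sqrt{N}}\, (f \circ \pi) \circ \eta_i = \tfrac{1}{\sqrt{N}}\, f \circ (\pi \circ \eta_i).
\]
Since $\sigma_i \circ \pi = \pi \circ \eta_i$, the two expressions agree as elements of $L^2(\Omega,P)$.

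The main conceptual step is the identification $\mu = P \circ \pi^{-1}$, which is the only place where real content enters the proof; everything else is bookkeeping via the intertwining relation. A minor technical point to be careful about is ensuring that the change-of-variables formula applies, which requires $\pi$ to be Borel measurable (continuity suffices), and that $V$ truly maps into $L^2(\Omega,P)$ before it is declared isometric — but once the pushforward identity is in hand, this is automatic since $Vf$ has the same $L^2$ norm as $f$. No obstacle of substance beyond this appears.
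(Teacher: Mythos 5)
Your proposal is correct. Note that the paper itself gives no proof of this theorem --- it is stated as a citation to Jorgensen--Kornelson--Shuman --- so there is no internal argument to compare against; but your route is the standard and correct one: the set identity $\eta_i^{-1}(\pi^{-1}(\Delta)) = \pi^{-1}(\sigma_i^{-1}(\Delta))$ follows exactly from $\pi\circ\eta_i = \sigma_i\circ\pi$, the pushforward $P\circ\pi^{-1}$ is then a Borel probability measure on the compact set $X$ satisfying the Hutchinson fixed-point relation, and uniqueness of the fixed point gives $P\circ\pi^{-1}=\mu$, from which both the isometry and the well-definedness of $V$ on $L^2$ equivalence classes follow; part (2) is the direct computation you describe.
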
 

Consider now the projection-valued measure $E(\pi^{-1}(\cdot))$ from the Borel subsets of $X$ into the projections on $L^2(\Omega, P)$. We have the following result, which will show that $E(\pi^{-1}(\cdot))$ is indeed a dilation of the POVM $A$, in the sense of Naimark's dilation theorem. 

\begin{thm} \text{} [Davison] The projection-valued measure $E(\pi^{-1}(\cdot)),$ and the positive operator-valued measure $A$ are related as follows: 

$$V^{*}E(\pi^{-1}(\cdot))V = A(\cdot).$$

\end{thm}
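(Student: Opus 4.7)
The plan is to define the map $B: \mathscr{B}(X) \to \mathcal{B}(L^2(X,\mu))$ by
$$B(\Delta) = V^{*}E(\pi^{-1}(\Delta))V,$$
verify that $B \in S(X)$, and then show that $B$ satisfies the fixed-point equation (\ref{povm}). Uniqueness from Corollary \ref{FixedPOVM} will then force $B = A$.

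First I would verify $B \in S(X)$. Positivity of $B(\Delta)$ is immediate since $\langle V^{*}E(\pi^{-1}(\Delta))Vh,h\rangle = \langle E(\pi^{-1}(\Delta))Vh,Vh\rangle \geq 0$ because projections are positive. Normalization $B(X) = \text{id}_{L^2(X,\mu)}$ follows from $\pi^{-1}(X) = \Omega$, $E(\Omega) = \text{id}_{L^2(\Omega,P)}$, and the fact (Theorem \ref{intertwining}) that $V$ is an isometry so $V^{*}V = \text{id}_{L^2(X,\mu)}$. Countable additivity in the weak sense transfers directly from the countable additivity of $E$, since $\pi^{-1}$ commutes with disjoint unions and conjugation by $V$ is continuous in the weak operator sense.

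The key step is to verify the fixed-point identity for $B$. Using the relation $\pi \circ \eta_i = \sigma_i \circ \pi$ from (\ref{relation}), we have for any $\Delta \in \mathscr{B}(X)$,
$$\pi^{-1}(\sigma_i^{-1}(\Delta)) = (\sigma_i \circ \pi)^{-1}(\Delta) = (\pi \circ \eta_i)^{-1}(\Delta) = \eta_i^{-1}(\pi^{-1}(\Delta)).$$
Next, the intertwining $VF_i = T_i^{*}V$ from Theorem \ref{intertwining} gives, by taking adjoints, $S_i V^{*} = V^{*}T_i$, and directly, $VS_i^{*} = VF_i = T_i^{*}V$. Therefore
\begin{align*}
\sum_{i=0}^{N-1} S_i B(\sigma_i^{-1}(\Delta)) S_i^{*}
&= \sum_{i=0}^{N-1} S_i V^{*} E(\eta_i^{-1}(\pi^{-1}(\Delta))) V S_i^{*} \\
&= \sum_{i=0}^{N-1} V^{*} T_i E(\eta_i^{-1}(\pi^{-1}(\Delta))) T_i^{*} V \\
&= V^{*} \left( \sum_{i=0}^{N-1} T_i E(\eta_i^{-1}(\pi^{-1}(\Delta))) T_i^{*} \right) V \\
&= V^{*} E(\pi^{-1}(\Delta)) V = B(\Delta),
\end{align*}
where the second-to-last equality uses the fixed-point property (\ref{pvm_alphabet}) for $E$.

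Thus $B$ is a POVM satisfying the same fixed-point equation as $A$, so Corollary \ref{FixedPOVM} yields $B = A$, completing the proof. I do not expect any serious obstacle here; the heart of the argument is simply the compatibility between $\pi$ and the two IFS's encoded in (\ref{relation}), combined with the intertwining relation of Theorem \ref{intertwining} and the uniqueness supplied by the contraction mapping argument.
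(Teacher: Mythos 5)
Your proposal is correct and follows essentially the same route as the paper: define $B(\cdot)=V^{*}E(\pi^{-1}(\cdot))V$, use the relation $\pi\circ\eta_i=\sigma_i\circ\pi$ together with the intertwining $VF_i=T_i^{*}V$ and the fixed-point property of $E$ to show $B$ satisfies equation (\ref{povm}), and conclude $B=A$ by the uniqueness in Corollary \ref{FixedPOVM}. The only (welcome) addition is that you spell out the verification that $B$ is a POVM, which the paper asserts without detail.
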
 

\begin{proof} Define $L = V^*E(\pi^{-1}(\cdot))V,$ and observe that $L$ is a POVM with respect to the pair $(X, L^2(X,\mu))$. Our goal is to show that $L = A$. To this end, note that by the intertwining relations of Theorem \ref{intertwining}, we have that

\begin{eqnarray*} 
\sum_{i=0}^{N-1} F_i^*L(\sigma_i^{-1}(\cdot))F_i & = & \sum_{i=0}^{N-1} F_i^*V^* E(\pi^{-1}(\sigma_i^{-1}(\cdot)))VF_i \\
			      & = & \sum_{i=0}^{N-1} V^*T_i E(\pi^{-1}(\sigma_i^{-1}(\cdot))) T_i^*V \\
			      & = & \sum_{i=0}^{N-1} V^*T_i E(\eta_i^{-1}(\pi^{-1}(\cdot))) T_i^*V \\
			      & = & V^* \left( \sum_{i=0}^{N-1} T_i E(\eta_i^{-1}(\pi^{-1}(\cdot))) T_i^* \right) V \\
			      & =& V^*E(\pi^{-1}(\cdot))V \\
			      & = & L(\cdot), \end{eqnarray*}
			     
\noindent where the third equality is by equation (\ref{relation}), and the fifth equality is by equation (\ref{pvm_alphabet}).  Now $A$ is the unique POVM that satisfies equation (\ref{povm}). By the above computation, we see that $L$ also satisfies equation (\ref{povm}), and therefore, $L = A$.

\end{proof}

\section{Acknowledgements:}

The author would like to thank Judith Packer (University of Colorado) for her careful review of this material, and her guidance on this research.

\end{document}